\newcommand{\shortdot}[1]{\raisebox{-0.4pt}{$\stackrel{\bullet}{#1}$}}
\DeclareBoldMathCommand\boldlangle{\left\langle} 
\DeclareBoldMathCommand\boldrangle{\right\rangle}
\theoremstyle{plain}
\newtheorem{theorem}{Theorem}[section]
\newtheorem{lemma}[theorem]{Lemma}
\newtheorem{proposition}[theorem]{Proposition}
\newtheorem{algorithm}[theorem]{Algorithm}
\theoremstyle{definition}
\theoremstyle{remark}
\begin{document}

\title{A Stochastic Model for Electric Scooter Systems}
\author{ 
    Jamol Pender \\ School of Operations Research and Information Engineering \\ Cornell University
\\ 228 Rhodes Hall, Ithaca, NY 14853 \\  jjp274@cornell.edu  \\ 
\and
Shuang Tao \\ School of Operations Research and Information Engineering \\ Cornell University
\\ 293 Rhodes Hall, Ithaca, NY 14853 \\  st754@cornell.edu  \\ 
\and
Anders Wikum \\ School of Operations Research and Information Engineering \\ Cornell University
\\ 206 Rhodes Hall, Ithaca, NY 14853 \\  aew236@cornell.edu  \\ 
 }

\maketitle
\begin{abstract}
Electric scooters are becoming immensely popular across the world as a means of reliable transportation around many cities.  As these e-scooters rely on batteries, it is important to understand how many of these e-scooters have enough battery life to transport riders and when these e-scooters might require a battery replacement.  To this end, we develop the first stochastic model to capture the battery life dynamics of e-scooters of a large scooter network.  In our model, we assume that e-scooter batteries are removable and replaced by agents called \textbf{swappers}.  Thus, to gain some insight about the large scale dynamics of the system, we prove a mean field limit theorem and a functional central limit theorem for the fraction of e-scooters that lie in a particular interval of battery life.  Exploiting the mean field limit and the functional central limit theorems, we develop an algorithm for determining the number of \textbf{swappers} that are needed to guarantee levels of probabilistic performance of the system.  Finally, we show through a stochastic simulation and real data that our stochastic model captures the relevant dynamics.   
\end{abstract}


\section{Introduction}

It's a bird. It's a plane. Nah, it's a scooter! Electric scooter (e-scooter) companies are growing in popularity across the United States looking to take advantage of the ride-sharing and micro-mobility economy by providing an alternative to cars and bicycles. E-scooters were first introduced in September of 2017 in Santa Monica \citet{Hall2018} when the micro-mobility company Bird Rides Inc. placed thousands of scooters all around the city.  Bird's scooters were immediately popular with commuters, since they are convenient and are a low cost alternative to cars. These e-scooters can reach speeds of up to 25 miles per hour., but actually speeds vary by municipality.  The e-scooter technology has become a prevalent form of transportation to provide a feasible solution to the last mile problem in the transportation literature. 

These web-based e-scooters are controlled by rental networks and are easily operated by smartphones. Customers are able to use e-scooters by downloading mobile applications to their smartphones. The mobile applications (apps) then show customers an image of the nearest e-scooters to their GPS location and this information will direct customers to the nearest available e-scooter. A picture of the mobile application is given in Figure \ref{Bird_App}.  Using the information about available e-scooters, a customer will take an e-scooter and after completing their ride, customers can leave their e-scooter anywhere outside restricted zones, as indicated in the mobile application on their phone.

\begin{figure}[ht]
\captionsetup{justification=centering}
\begin{center}
		\includegraphics[scale = .3]{./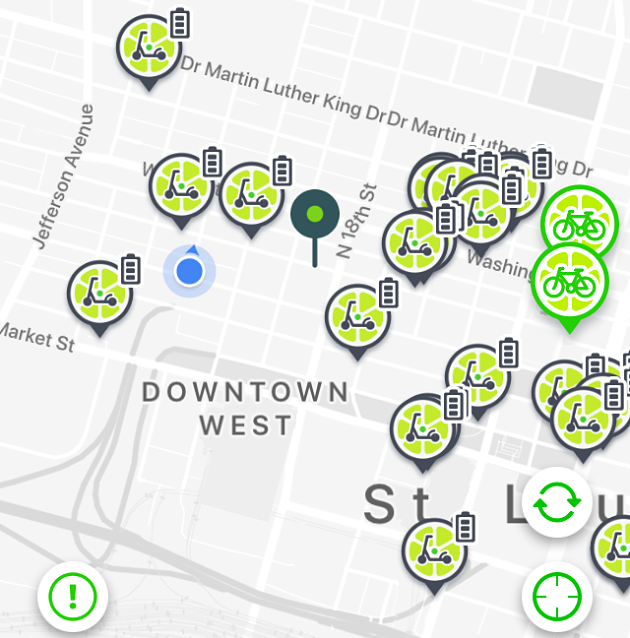}
		\end{center}
		\vspace{-.02in}
\caption{Picture of the Lime E-Scooter App. } \label{Lime_App}
\end{figure}

\begin{figure}[ht]
\captionsetup{justification=centering}
\begin{center}
		   \includegraphics[scale = .6]{./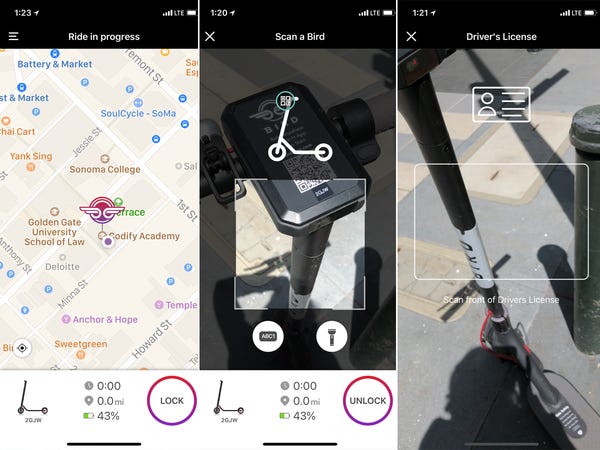}
		\end{center}
		\vspace{-.2in}
\caption{Picture of Bird E-Scooter App With Battery Life Remaining. } \label{Bird_App}
\end{figure}

Companies operating e-scooters are rapidly expanding operations in the United States.  For example, another company called Lime has a fleet of e-scooters that are available in more than 60 US cities.  They also have an international presence in over 10 cities as well.   Today, several major companies, including Bird and Lime, offer dockless e-scooter services, and several other companies, including the ride-sharing companies Uber and Lyft, have recently entered the market as the demand continues to grow.  Recent financial analyses show that Lime is valued over \$1 billion and its rival Bird is valued at more than \$2 billion.  This growth will continue to accelerate as the demand grows for these e-scooters in many of the largest cities around the world.  
  
While e-scooter transportation should reduce emissions, and automobile congestion in local areas, e-scooters are not without accidents, see for example \citet{allem2019electric, kobayashi2019merging, carville2018}.  Although they are a convenient and affordable solution to transportation gaps, they are operated by batteries and need to be charged.  The battery charging operations for these e-scooters consists of private individuals who go around and collect scooters to charge them.  Bird scooter collectors are called “Bird Hunters” and Lime scooter collectors are called "Lime Juicers".  These collectors can make significant profits if they charge these scooters, see for example \citet{goshtasb2018proposing}.  Generally, there is a flat payment for charging the e-scooter and the payment will increase depending on the difficulty of locating the scooter.  Typically the "Bird Hunters" will take the scooters home and after charging them overnight, need to drop them off as groups of three at dedicated assigned points called “Nests” \citet{bordes2019impacts}.  However, this particular way of charging e-scooters is not without incident.  There have been several situations where juicers and hunters quarrel over the ability to charge the e-scooters \citet{goshtasb2018proposing}.  There are also several places where juicers and hunters "own" a specific territory to charge the e-scooters.  It would of great interest to eliminate this territorial behavior over charging the e-scooters.    

To solve some of these issues, several e-scooter production companies are developing e-scooters with easily removable batteries.  In this situation, instead of having "juicers" and "hunters" that need to charge the batteries, batteries are swapped by employees of the scooter company.  We call these agents who replaced the dead batteries "swappers".  Many large scale e-scooter systems such as Bird and Lime already give their users the ability to see real-time availability and "battery life" through a smartphone app and web API.  A picture of the Bird app showing that $43\%$ battery life for an e-scooter is given in Figure \ref{Bird_App}.   This information helps users make better decisions such as where to pick up e-scooters with enough battery life to get to their destination. It is also helpful for "swappers" who will replace the batteries when the scooter's battery life is below a predetermined threshold.  

However, in order to implement the "swapping" for a scooter company, we need to understand the battery life dynamics of a large scale e-scooter system.  Understanding these dynamics will enable us to determine how many of these swappers are necessary to achieve the ideal performance of the e-scooter system.  To this end, in this paper, we develop the first stochastic model that analyzes this "swapping" process along with the battery dynamics of e-scooters.  Our goal in this work is to understand the dynamics of a removable e-scooter system and what proportion of the e-scooters have a particular fraction of battery life.  In particular, we focus on understanding how many scooters are actually available to customers at a specific time when they want to travel.  Our analysis yields new insights for staffing removable battery e-scooter systems that will be used in the future.


 \subsection{Main Contributions of Paper}

In this section, we describe the contributions of our work in this paper. 
\begin{itemize}
\item We construct the first stochastic e-scooter model using empirical processes, which measure the battery life dynamics. Since our model is difficult to analyze for a large number of scooters, we propose to analyze an empirical process that describes the proportion of scooters that have a certain fraction of battery life remaining.  Our model is informed by real data collected from the JUMP API in Washington D.C.
\item We prove a mean field limit and a central limit theorem for our stochastic e-scooter empirical process, showing that the mean field limit and the variance of the empirical process can be described by a system of $\frac{K^2+3K}{2}$ differential equations where $K$ is the number equally sized intervals of battery life.    
\item We develop a novel algorithm based on our limit theorems for staffing the number of swappers that are needed to ensure that the proportion of scooters that have a small amount of battery is smaller than a given threshold.  
\end{itemize} 


\subsection{Organization of Paper}

The remainder of this paper is organized as follows. Section~\ref{Data} describes how we obtained and analyzed our e-scooter data.  It also includes insights on how the data was used to inform our model and model parameters.  In Section \ref{Sec_Bike_Model}, we introduce two stochastic e-scooter models.  The first model assume that battery usage is instantaneous , while the second model assumes that battery usage occurs according to an exponential distribution.  In Section, \ref{Mean_Field_Limit}, we prove the mean field limit of our stochastic e-scooter model showing that the mean field limit is a system of $K$ differential equations.  In Section \ref{Central_Limit}, we prove a functional central limit theorem for the empirical process. We also show the variance of the diffusion limit can be approximated by a system of ordinary differential equations that are coupled to the mean field limit.  In Section \ref{Staffing}, we use the mean field and central limit theorems to construct a staffing policy for the number of "swappers" need to satisfy probabilistic performance constraints.  In Section \ref{Numerics}, we show that our results are indeed valid by comparing them to a stochastic simulation of the e-scooter system.  In Section \ref{Conclusion}, we conclude and give directions for future work.  Finally, additional proofs and theorems for our second model are given in the Appendix or Section \ref{Appendix}.  

\subsection{Preliminaries of Weak Convergence}

Following \citet{ko2018strong}, we assume that all random variables in this paper are defined on a common probability space $(\Omega, \mathcal{F}, \mathbb{P})$.  Moreover, for all positive integers $k$, we let $\mathcal{D}([0 , \infty), \mathbb{R}^k)$ be the space of right continuous functions with left limits (RCLL) in $\mathbb{R}^k$ that have a time domain in $[0, \infty)$.  As is usual, we endow the space $\mathcal{D}([0 , \infty), \mathbb{R}^k)$ with the usual Skorokhod $J_1$ topology, and let $M^k$ be defined as the Borel $\sigma$-algebra associated with the $J_1$ topology. We also assume that all stochastic processes are measurable functions from our common probability space $(\Omega, \mathcal{F}, \mathbb{P})$ into $(\mathcal{D}([0 , \infty), \mathbb{R}^k), M^k)$.   Thus, if $\{\zeta\}^\infty_{n=1}$ is a sequence of stochastic processes, then the notation $\zeta^n \rightarrow \zeta$ implies that the probability measures that are induced by the $\zeta^n$'s on the space $(\mathcal{D}([0 , \infty), \mathbb{R}^k), M^k)$ converge weakly to the probability measure on the space $(\mathcal{D}([0 , \infty), \mathbb{R}^k), M^k)$ induced by $\zeta$.  For any $x \in (\mathcal{D}([0 , \infty), \mathbb{R}^k), M^k)$  and any $T > 0$, we define 
\begin{equation}
||x||_T \equiv \sup_{0 \leq t \leq T}  \ \max_{i = 1,2,...,k} |x_i(t)|
\end{equation}
and note that $\zeta^n$ converges almost surely to a continuous limit process $\zeta$ in the $J_1$ topology if and only if 
\begin{equation}
||\zeta^n - \zeta||_T \to 0 \quad a.s.
\end{equation}
for every $T > 0$.

\section{Insights from Electric Scooter Data} \label{Data}

In this section, we describe some of the e-scooter trip data that we collected for this paper. This data is used to inform our stochastic models in the subsequent sections.  With the real data, we can understand how e-scooter battery levels change when riders take trips, the rate of arrival to use e-scooters, and how long riders use the e-scooters in terms of time duration and distance. Below we describe how we collected raw geographic bike-share data, reconstructed likely trips, and filtered some data points that did not make sense from a rider perspective. 

\subsection{The Data Collection Process}
General Bikeshare Feed Specification (GBFS) is an industry standard for sharing bike-share data that has been adopted by virtually every bike-sharing company, thanks in no small part to it being a requirement for operation in many US cities. GBFS is designed to provide a real-time snapshot of a city’s fleet, which includes vehicle locations and battery levels for bikes that are not in active use.  This information is also collected without keeping records of trips or personal information. In addition to providing more detailed monthly reports to the District Department of Transportation, maintaining a public API with GBFS data is a condition of operating in the Washington D.C. metro area. 

\begin{figure}[ht]
\captionsetup{justification=centering}
\begin{center}
		\includegraphics[scale=.65]{./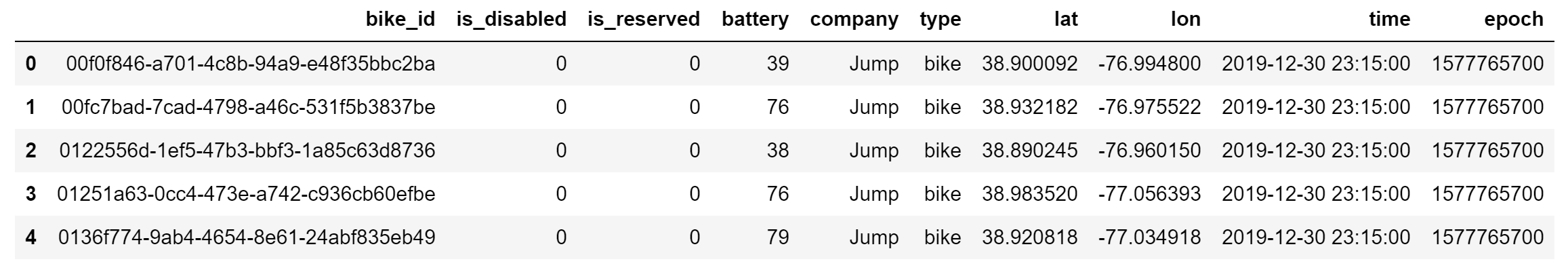}
		\end{center}
		\vspace{-.2in}
\caption{Raw GBFS Data} \label{raw_data}
\end{figure}

The dataset analyzed in this paper is based on GBFS data scraped from APIs maintained by JUMP for the D.C. metro area. The data was pulled from the JUMP API once per minute from the dates 01/01/2020 to 03/01/2020 during peak hours (6:00 – 23:59) to coincide with vehicle location updates. After adding time stamps, the scraped data has the form shown in Figure \ref{raw_data}.  Note that the data has a vehicle type and some of them say "bike".  This is because JUMP operates both e-bikes and e-scooters and the data is collected for both.  Since our analysis is centered around e-scooters, we removed the data for the e-bikes.  

\begin{table}[H]
\caption{GBFS Data Fields} 
\centering 
\begin{tabular}{c c } 
\hline\hline 

\hline 
$bike\_id$ & Unique identification String for each vehicle in fleet.\\
$is\_disabled$ & Boolean, 1 if vehicle is outside of approved geo-fences and 0 otherwise. \\
$is\_reserved$ & Boolean, 1 if vehicle is reserved through the JUMP app, 0 otherwise.\\
$battery$ & Integer between 0 and 100 representing percent battery remaining.\\
$company$ & Company that operates the vehicle.\\
$type$ & Vehicle type, one of `bike' or `scooter'.\\
$lat/lon$ & GPS location of vehicle at the given timestamp.\\
$time$ & Date/Time when API call is executed.\\
$epoch$ & Time API call is executed, in seconds since 01/01/1970.\\
\hline 
\end{tabular}
\label{table:nonlin} 
\end{table}

\paragraph{Reconstructing E-Scooter Trips from Raw Data}
We are ultimately interested in trip data for the purpose of informing the model parameters for our stochastic models. Though GBFS data explicitly excludes trip records, we were able to reconstruct trips by observing the times and locations at which bikes disappeared from and reentered the GBFS dataset.  Using this information, our goal was to determine if a trip actually occurred or was it something else like rebalancing or strange movements of the e-scooter.  

More explicitly, we compute the haversine distance between the start and end GPS coordinates.  The start location is defined to be where the e-scooter first disappears from the data extraction from the API and the end location is where the e-scooter reappears in the data again.  To mitigate noise from potential rebalancing, any disappearance with a distance less than 50 meters was removed from the dataset.  Each of the remaining disappearances were tentatively designated as trips, with corresponding start and end times, locations, and battery levels. The distance of the trip was determined by the haversine distance between its start and end GPS coordinates, which is likely an underestimate of the true distance traveled on the e-scooter. Thus, when measuring the e-scooter drain rates, we are definitely overestimating this quantity in our analyses.  

\begin{figure}[ht]
\captionsetup{justification=centering}
\begin{center}
		\includegraphics[scale=.6]{./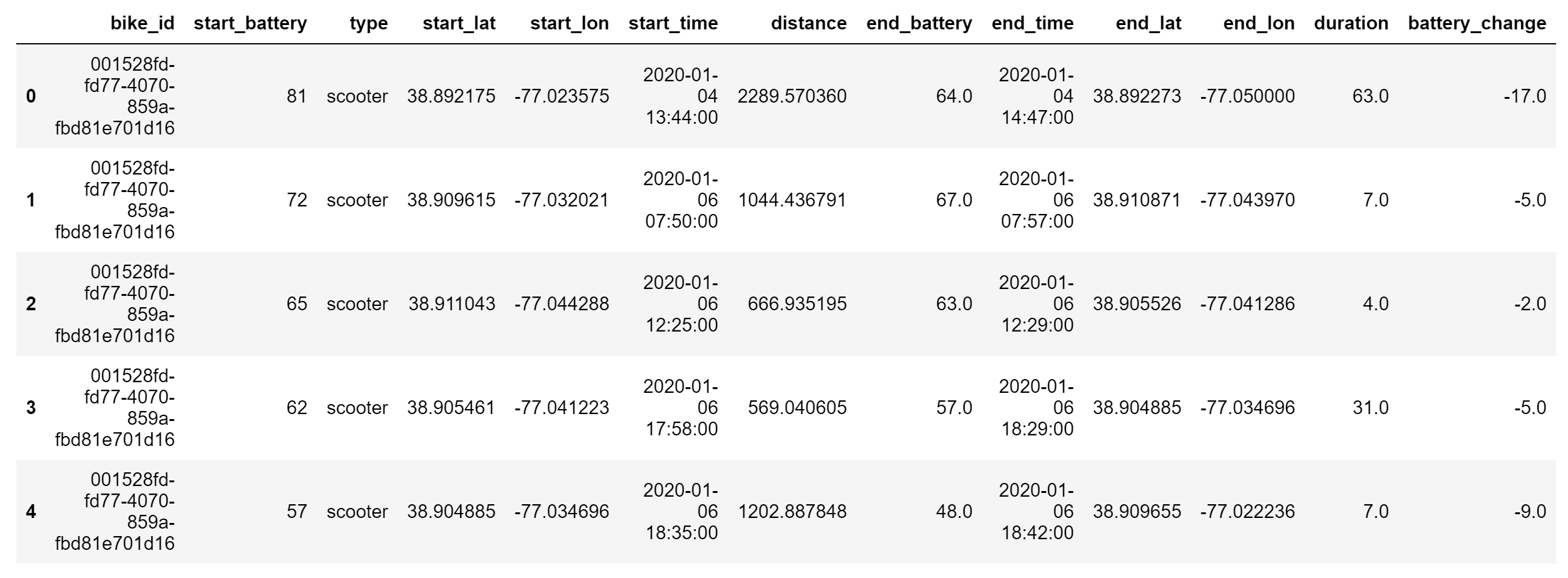}
		\end{center}
		\vspace{-.2in}
\caption{Cleaned Trip Data} \label{trip_data}
\end{figure}

Our final data filtering steps included removing trips corresponding to overnight disappearances, trips with an average velocity greater than the maximum theoretical speed of JUMP e-bikes (25 mph), and trips corresponding to recharging in which the e-scooter battery life increased. After the filtering process was complete, we were left with a dataset of 71,518 likely trips with format shown in Figure \ref{trip_data} .

\subsection{Insights Gained from Data}


\paragraph{Distance Traveled} In this section, we use the data we collected to understand how long riders travel on the e-scooters.  On the right of Figure \ref{Scooter_Duration_cdf}, we plot a histogram of the distances traveled by riders on the top plot. Since most of the data  has less than 5 kilometers distance, we restricted the dataset to be less 5 kilometers.  On the bottom right of Figure \ref{Scooter_Duration_cdf}, we plot the cumulative density function (cdf) of the distance data.  This cdf plot allows us to understand the quantiles or percentiles of the data more clearly than the histogram plot.  In this context, we observe that the median distance or 50\% quantile is roughly equal to 800 meters or close to half a mile.  Moreover, we observe that about 80\% of riders are traveling less than 1.5 kilometers.

\paragraph{Time Duration of Riders}
On the left of Figure \ref{Scooter_Duration_cdf}, we plot a histogram of the time spent by riders with the e-scooters on the top plot.  Since most of the data was less than one hour, we restricted the dataset to be less than one hour or 60 minutes.  On the bottom left of Figure \ref{Scooter_Duration_cdf}, we plot the cumulative density function (cdf) of the time duration data.  This cdf plot allows us to understand the quantiles or percentiles of the data more clearly than the histogram plot.  In this context, we observe that the median duration is roughly 8 minutes and about 80\% of riders are traveling less than 15 minutes on an e-scooter. 

 \begin{figure}[ht]
\captionsetup{justification=centering}
\begin{center}
		\includegraphics[scale=.3]{./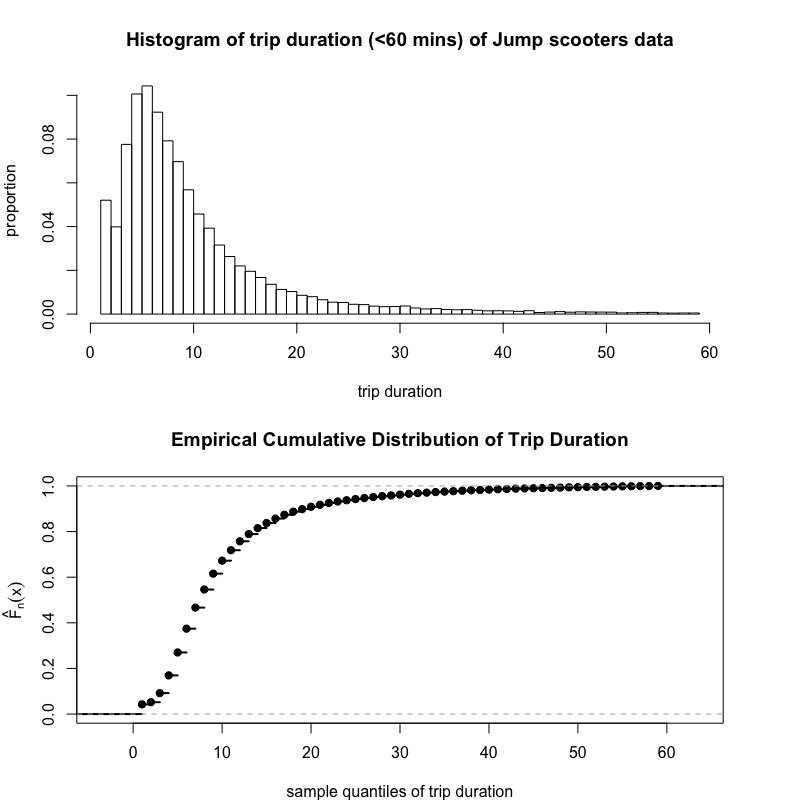}~\includegraphics[scale=.3]{./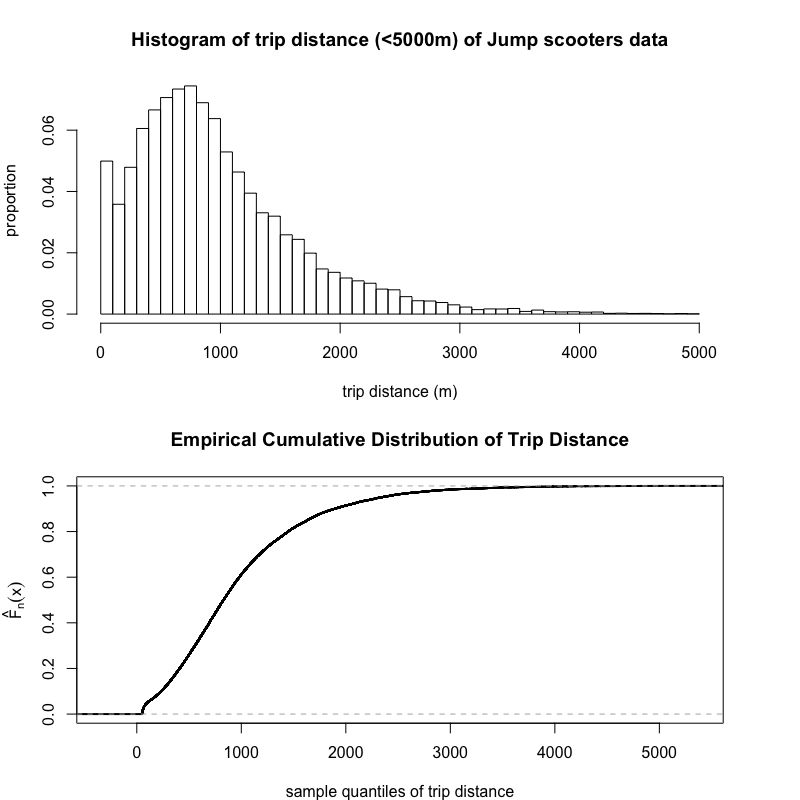}
		\end{center}
		\vspace{-.2in}
\caption{Scooter Rental Duration Distribution (Left).  Scooter Rental Distance Distribution (Right). } \label{Scooter_Duration_cdf}
\end{figure}


\paragraph{Inter-arrival Times of Riders}
On the left of Figure \ref{Scooter_interarrival_cdf}, we plot a histogram of the inter-arrival times of riders to e-scooters in the network.  This histogram provides information about how many riders we should expect to arrive to the system during a time period.  We should mention that the arrival rate should depend on time, however, we ignore this time dependence when looking at the inter-arrival times here.  Since most of the data for the inter-arrival times was less than 20 minutes, we restricted the dataset to be less than 20 minutes.  On the bottom left of Figure \ref{Scooter_interarrival_cdf}, we plot the cumulative density function (cdf) of the inter-arrival data.  We observe that the median duration is roughly equal to 1 minute, however, this information is a bit misleading because of how the data is collected.  Since the scooter API is updated only once per minute, it is impossible to observe an inter-arrival time less than one minute.  Moreover, we observe that about 95\% of the inter-arrival times are less than three minutes in length.   On the right of Figure \ref{Scooter_interarrival_cdf}, we plot the arrival rate as a function of time average over the days of the data set.  It is clear that the arrival rate is non-stationary and varies over the time of day.  The two hump pattern (one in the morning and one in the afternoon) is also observed in this data and is common in ride-sharing data.  

\begin{figure}[ht]
\captionsetup{justification=centering}
\begin{center}
		\includegraphics[scale=.3]{./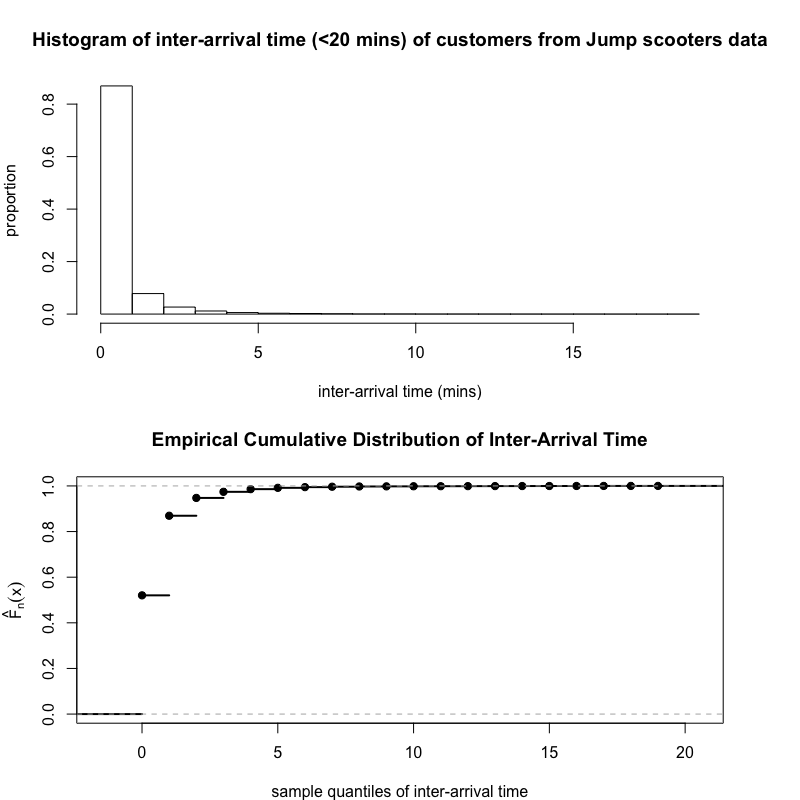}~\includegraphics[scale=.3]{./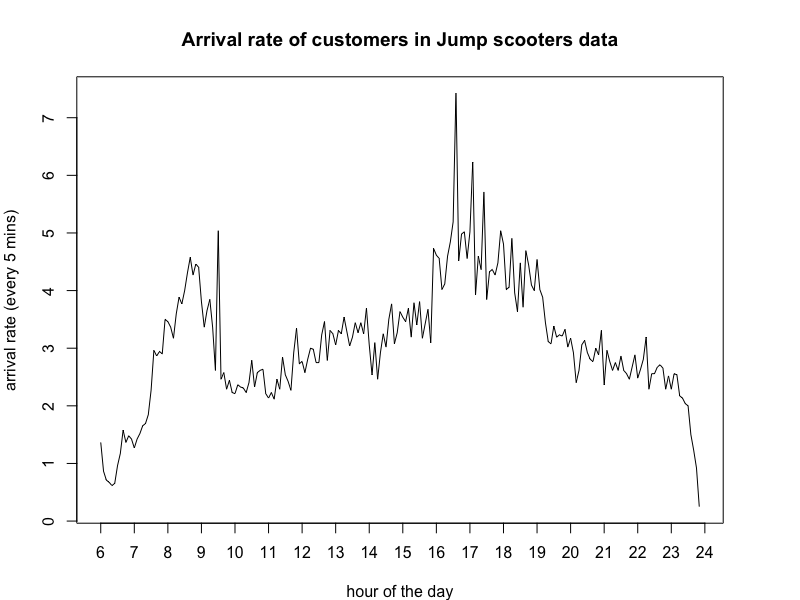}
		\end{center}
		\vspace{-.2in}
\caption{Scooter Rental Inter-Arrival Time Distribution (Left). Scooter Rental Arrival Rate Throughout the Day (Right).  } \label{Scooter_interarrival_cdf}
\end{figure}

%

%

\begin{figure}[ht]
\captionsetup{justification=centering}
\begin{center}
		\includegraphics[scale=.3]{./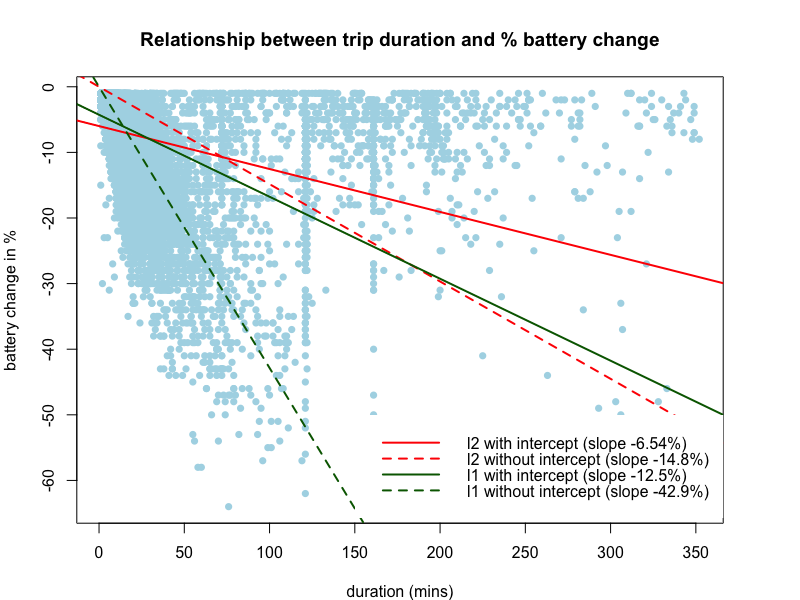}~\includegraphics[scale=.3]{./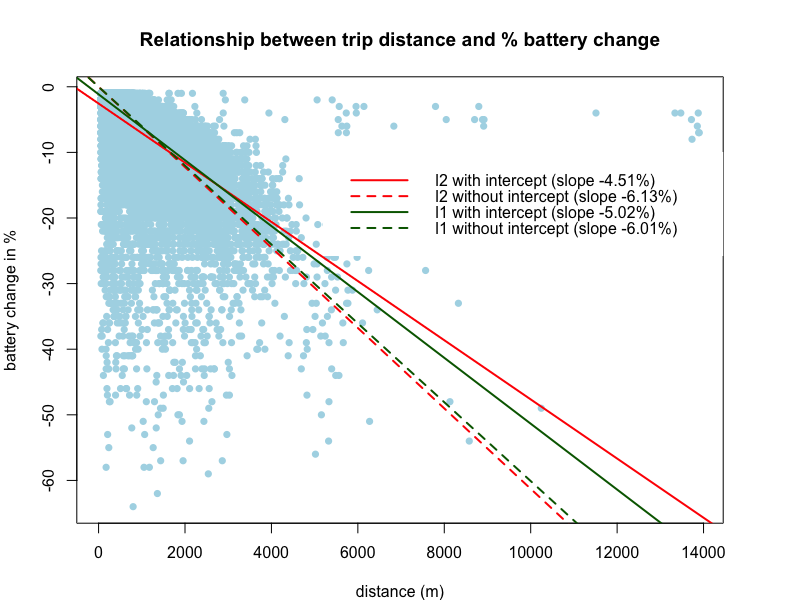}
		\end{center}
		\vspace{-.2in}
\caption{Scooter Trip Duration vs. Change in Battery Life (Left). Scooter Trip Distance vs. Change in Battery Life (Right). } \label{Scooter_Duration_regression}
\end{figure}

\paragraph{Estimating Battery Life of E-Scooters}  Another informative measurement from a data perspective is the battery usage dynamics of riders.  On the left of Figure \ref{Scooter_Duration_regression}, we show a scatterplot of trip time duration and the decrease of battery life.  The plot measures for each trip, how long the customer used the e-scooter and what was the subsequent drain in the battery life of the e-scooter.  On the right of Figure \ref{Scooter_Duration_regression}, we show a scatterplot of trip distance and the decrease of battery life.  This plot measures for each trip, how far in meters the customer drove the e-scooter and what was the subsequent drain in the battery life. We should emphasize that the e-scooter was not tracked during the entire time of usage and only the starting and ending GPS locations were used to compute the haversine distance between them.  In both plots of Figure \ref{Scooter_Duration_regression}, we observe that the relationship between time or distance with battery life is negative. We use regression analysis to explore these relationships in a formal way.  Table \ref{table:reg_result} summarizes the coefficients of different regression methods used to understand the relationship between \% battery change and distance/duration.

\begin{table}[h]
\begin{center}
\caption{Regression coefficients for different methods} \label{table:reg_result}
\begin{tabular}{c|c|c|c} 
\hline\hline 
feature & norm & intercept & slope \\
\hline
\multirow{4}{*}{distance}& \multicolumn{1}{l}{$L_2$} & \multicolumn{1}{l}{0} & \multicolumn{1}{l}{-6.13\%}\\
                         &\multicolumn{1}{l}{$L_2$} & \multicolumn{1}{l}{-2.55} & \multicolumn{1}{l}{-4.51\%}\\\cline{2-4}
                         & \multicolumn{1}{l}{$L_1$} & \multicolumn{1}{l}{0} & \multicolumn{1}{l}{-6.01\%}\\
                         &\multicolumn{1}{l}{$L_1$} & \multicolumn{1}{l}{-1.15} & \multicolumn{1}{l}{-5.02\%} \\\hline
\multirow{4}{*}{duration}& \multicolumn{1}{l}{$L_2$} & \multicolumn{1}{l}{0} & \multicolumn{1}{l}{-14.8\%}\\
                         &\multicolumn{1}{l}{$L_2$} & \multicolumn{1}{l}{-5.98} & \multicolumn{1}{l}{-6.54\%}\\\cline{2-4}
                         & \multicolumn{1}{l}{$L_1$} & \multicolumn{1}{l}{0} & \multicolumn{1}{l}{-42.9\%}\\
                         &\multicolumn{1}{l}{$L_1$} & \multicolumn{1}{l}{-4.25} & \multicolumn{1}{l}{-12.5\%} \\\hline                   
\hline 
\end{tabular}
\end{center}
\end{table}

We observe that in both plots of Figure \ref{Scooter_Duration_regression} that performing the regression without an intercept increases the absolute value of the negative slopes.  Moreover, we see that $L_1$ regression generally yields more negative slopes than their $L_2$ counterparts.  This is especially true in the duration vs. battery plot on the left of Figure \ref{Scooter_Duration_regression}.  Finally, we observe that the trip distance seems to be a better estimate of real trips versus the time duration of trips.  This is consistent with the estimates of battery life that are reported from the Jump scooter company.  

\begin{figure}[ht]
\captionsetup{justification=centering}
\begin{center}
		\includegraphics[scale=.3]{./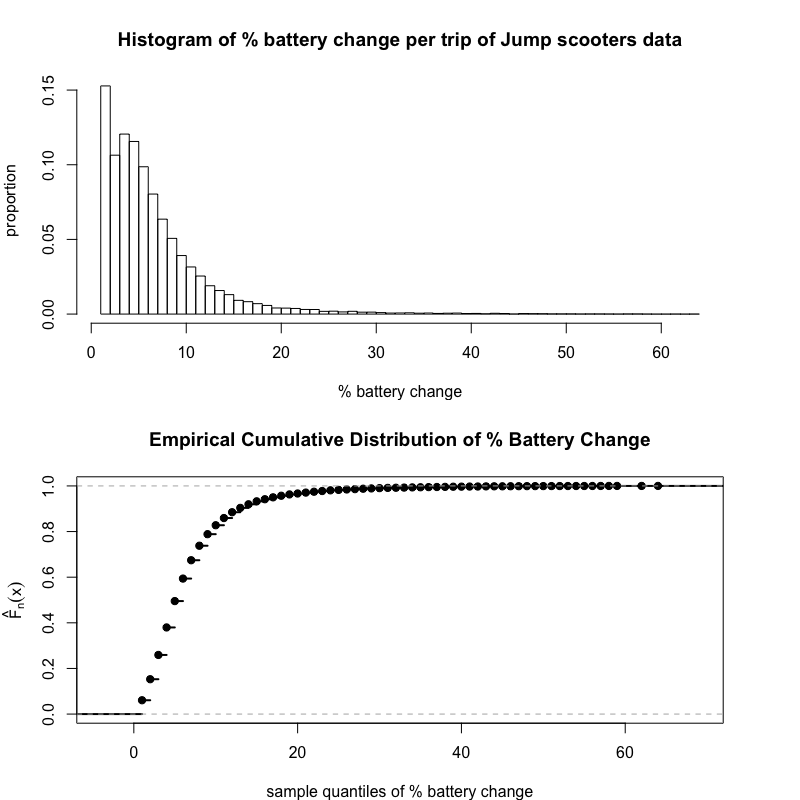}
		\end{center}
		\vspace{-.2in}
\caption{Scooter Rental \% Battery Change Distribution.  } \label{Scooter_Battery_cdf}
\end{figure}

\paragraph{Battery Usage Per Trip}
Finally, on the top of Figure \ref{Scooter_Battery_cdf}, we plot a histogram of the battery used by customer.  Since most of the data was less than 60\% of the total battery life, we restricted the dataset to be less than 60 \%.  We observe that the histogram for battery usage almost looks exponentially distributed, but probably is more of a gamma distribution.  On the bottom of Figure \ref{Scooter_Battery_cdf}, we plot the cdf of the battery usage data.   We observe that the median  battery usage is about 6\% of battery life  and about 80\% of riders are using less than 10\% of battery life of the scooter on each trip. From the data, we also observe that the minimum starting battery is about 21\%. Thus, the average scooter can do about 13 trips before its battery needs to be swapped out for a new one.


\section{Stochastic Models and Limit Theorems} \label{Sec_Bike_Model}

In this section, we propose two new Markovian queueing models for the empirical process of e-scooters battery life.  In our first model, we assume the battery usage time is instantaneous.  However, in our second model, we assume the battery usage time is an exponentially distributed random variable.  Although one can analyze each e-scooter individually, this is a high dimensional stochastic process and given the large scale of these e-scooter systems, we wish not to take on this intractable endeavor.   As a result, we resort to using an empirical process perspective for the e-scooter system.  The empirical process perspective reduces the dimension from the number of e-scooters, which is large, to the number of intervals of battery life one would like to keep track of. For example, in Washington D.C, each scooter company is allowed to operate at most 2500 scooters.  However, the number of intervals of battery life for the scooters is at most 100.  This represents an 25 fold reduction in dimension.   However, it may not be necessary to even keep that much granularity for the purposes of this work.  We suggest a value of $K = 10$ to give battery intervals of $10\%$.    This would yield a 250 fold reduction in dimension.  

There is a large literature in the space of bike sharing and the sharing economy, see for example  \citet{hampshire2012analysis, nair2013large, schuijbroek2017inventory, faghih2017empirical, singla2015incentivizing, jian2016simulation, freund2020data}.  Despite there being much research on bike sharing networks there is much less literature on electric scooters and their impact on transportation networks in large cities.  Our goal in this work is to add to the growing literature in the sharing economy, but specifically for e-scooters.  Our approach leverages new data resources for the scooters and uses the data to inform the structure of the stochastic models we will build in the sequel.  Our new stochastic models leverage techniques from empirical process theory and weak convergence of martingales.     
 
Empirical processes are not new and have been used in a variety of contexts in queueing theory, see for example \citet{graham1997stochastic, graham2000kinetic, graham2000chaoticity, graham2005functional, li2014mean, li2016mean, mitzenmacher2016analyzing, ying2016approximation, iyer2011mean, iyer2014mean, yang2016mean, yang2018mean, yang2019information}.  One of the first papers to consider empirical processes in ride-sharing is \citet{mohamed2012mean}, where the authors model bike sharing networks as a network of finite capacity single server queues.  Using empirical processes, they prove a mean field limit theorem for the number stations that have $k$ bikes.  In this context, the dimensionality is reduced from the total number of bikes in the network to roughly the size of the largest station.  In large metropolitan cities like New York City and Washington D.C., this reduction is huge and useful.    Recently, \citet{tao2017stochastic}, prove a central limit theorem for the same bike sharing model, showing that the central limit theorem is quite good at describing the fluctuations of the stochastic bike-sharing network process.  Moreover, recent work by \citet{fricker2016incentives, bortolussi2016mean, li2016bike, li2016queueing, li2017fluid, el2018using} has also generalized the mean field limit theorems of \citet{mohamed2012mean} to the setting of non-stationary bike sharing systems and for Markovian arrival processes (MAPs) for the arrival and service distributions.  More recently, \citet{graef2019fractional} extend the mean field model from ordinary differential equations to fractional ordinary differential equations.  Generally, the mean field limit theorems provide rigorous support for using ordinary differential equations for describing the mean dynamics of the empirical measure. However, \citet{graef2019fractional} shows that using fractional ordinary differential equations might be more appropriate as they provide more flexibility than their non-fractional counterparts. Before we get specific about the models that we will describe in the sequel, we give some of the common notation that we will use throughout the remainder of the paper below in Table \ref{table:nonlin}. 

\begin{table}[H]
\caption{Summary of Notation} 
\centering 
\begin{tabular}{c c } 
\hline\hline 

\hline 
$i$ & index for e-scooter\\
$N$ & Number of e-scooters \\ 
$N^*$ & Number of swappers\\
$K$ & Battery life bucket size\\
$K_U$ & Battery threshold for riding\\
$\lambda$ & Arrival rate of recharger to a e-scooter\\
$\mu$ & Arrival rate of customer to a e-scooter\\
$p_{ij}$ & Probability of battery end up in bucket $j/K$ in a ride with starting battery in bucket $i/K$\\
$B_i(t)$ & Battery life of e-scooter $i$ at time $t$\\
$Y^N(t)$ & Empirical process of e-scooters battery life at time $t$\\
\hline 
\end{tabular}
\label{table:nonlin} 
\end{table}

\subsection{Model 1: Instantaneous Battery Usage}\label{model_1}
Here we describe our first model for modeling the battery dynamics of an e-scooter network.  We consider an empirical process of the battery life among all e-scooters in the system. The goal here is to model the distribution of battery life as Markov process and study the asymptotic behavior of the system as the number of e-scooters grows towards infinity i.e, $N \to \infty$.  More specifically, we analyze a mean field and central limit theorem for the empirical process to help understand how different parameters can affect the system's performance.

\subsubsection{Modeling Assumptions}

\paragraph{Customer Arrival (battery usage):}
We assume that customers arrive to the system following a Poisson process with rate $\mu N$ (uniform on geographical location). Only e-scooters with battery life above a certain threshold $K_U/K$ can be picked up and used by the customer. For simplicity of the model, we assume that after the customer picks up the e-scooter, the battery life changes immediately according to a probability matrix $P=(p_{ij})_{ij}$.  Each element in the matrix  $P=(p_{ij})_{ij}$ represents an e-scooter moving from the $i^{th}$ interval of battery life to the $j^{th}$ interval.  Using the Jump scooter data we collected, we find the following empirical probability matrix $\hat{P}$ when setting $K=5$, which is equal to

$$\hat{P}=\begin{blockarray}{cccccc}
[0,20\%] & [20\%,40\%] & [40\%,60\%] & [60\%,80\%] & [80\%,100\%] \\
\begin{block}{(ccccc)c}
0 & 0 & 0 & 0 & 0 & [0,20\%] \\
0.097 & 0.903 & 0 & 0 & 0 & [20\%,40\%]  \\
0.003&	0.345&	0.652 & 0 &0& [40\%,60\%] \\
0.0008 &	0.022 & 0.329 &	0.648 & 0 & [60\%,80\%]\\
0.00 & 0.004 &	 0.021 & 0.446 & 0.529 & [80\%,100\%] \\
\end{block}
\end{blockarray}$$
Here the first row of $\hat{P}$ is zero because the minimum starting battery life is around 21\% from the Jump scooters data.

\paragraph{Swapper Arrival:}
We assume that swappers arrive to the system following a Poisson process with rate $\lambda N^*$. The probability of a e-scooter with battery life in bucket $k/K$ getting recharged is based on a choice model $\frac{Y_k^N(t)g_k}{\sum_{i=0}^{K-1}Y_i^N(t)g_i}$, where $\{g_i\}_{i=0}^{K-1}>0$ is a decreasing sequence on $i$. For simplicity of the model, we assume that after the swappers picks up the e-scooters, the battery life jumps to full immediately (i.e. neglecting swapping time).

\paragraph{Remark:} Note that instead of only recharge e-scooters with low battery, here we use a choice model for recharging that gives more weight to e-scooters with low battery life.  With the choice model, there is a positive probability to recharge a e-scooter in bucket $\left[\frac{K-1}{K},1\right]$.  Without the choice model, we cannot guarantee the Lipschitz property of the drift function for the limiting mean field equations.  The Lipschitz property is crucial for proving the mean field and central limit results in this work. However, one can set up the choice model $\{g_i\}_{i=0}^{K-1}$ so that the probability of recharging a e-scooter with high battery is low (In fact we only need $\min_{i}\{g_i\} > 0$).

\subsubsection{Markov Jump Process}
Now that we have described the dynamics of the model, we are now free to construct our empirical process model.  To this end, we define the empirical process $Y_k^N(t)$ as the proportion of e-scooters with remaining battery life between $[\frac{k}{K},\frac{k+1}{K})$.  Thus, we can write $Y_k^N(t)$ as the following equation
$$Y_k^N(t)=\frac{1}{N}\sum_{i=1}^{N}\mathbf{1}\left\{\frac{k}{K}\leq B_i(t) < \frac{k+1}{K}\right\}, \quad k=0,\cdots, K-1$$
where $N$ is the total number of e-scooters in the system, and $B_i(t)$ is the battery life of the $i^{th}$ e-scooter at time $t$.  Moreover, we also assume that battery drainage of a single ride follows a discrete probability distribution, i.e.
$$P(\text{battery end up in } j/K \text{ but started with battery in } i/K)=p_{ij}, \quad j\leq i=0,\cdots, K-1.$$  If we condition on $Y_k^{N}(t)=y_k$, the transition rates of $y$ are specified as follows.

\paragraph{Swapping Batteries:}
When there is a swapper arriving to the system to swap an e-scooter's battery with battery life in the interval $[\frac{k}{K},\frac{k+1}{K}]$, the proportion of e-scooters with battery life in bucket $[\frac{k}{K},\frac{k+1}{K}]$ goes down by $1/N$, the proportion of e-scooters with battery life in bucket $[\frac{K-1}{K},1]$ goes up by $1/N$, and the transition rate $Q^N$ is 
\begin{eqnarray}
Q^{N}\left(y,y+\frac{1}{N}(\mathbf{1}_{K-1}-\mathbf{1}_{k}) \right) &=&  \lambda N^* \frac{y_kg_k}{\sum_{i=0}^{K-1}y_ig_i} .
\end{eqnarray}

\paragraph{Riding a Scooter:}
When there is a customer riding an e-scooter with battery life in the interval $[\frac{k}{K},\frac{k+1}{K}]$ where $k\geq K_U$, the proportion of e-scooters with battery life in the interval$[\frac{k}{K},\frac{k+1}{K}]$ moves down by $1/N$ with probability $p_{kj}$.  In addition, the proportion of e-scooters with battery life in the interval $[\frac{j}{K},\frac{j+1}{K}]$ moves up by $1/N$, and the transition rate $Q^N$ is 
\begin{eqnarray}
Q^{N} \left(y,y+\frac{1}{N}(\mathbf{1}_{j}-\mathbf{1}_{k}) \right) &=&
\mu N p_{kj} y_k \mathbf{1}\{k\geq K_U\} \mathbf{1}\{j\leq k\}.
\end{eqnarray}

With the described transitions, one can show that $Y^N(t)$ is a Markovian jump process with the above transition rates.  In Figure \ref{model1_diagram}, we illustrate the transitions between states in the model proposed above.

\begin{figure}[H]
\captionsetup{justification=centering}
\begin{center}
		\includegraphics[scale = 0.6]{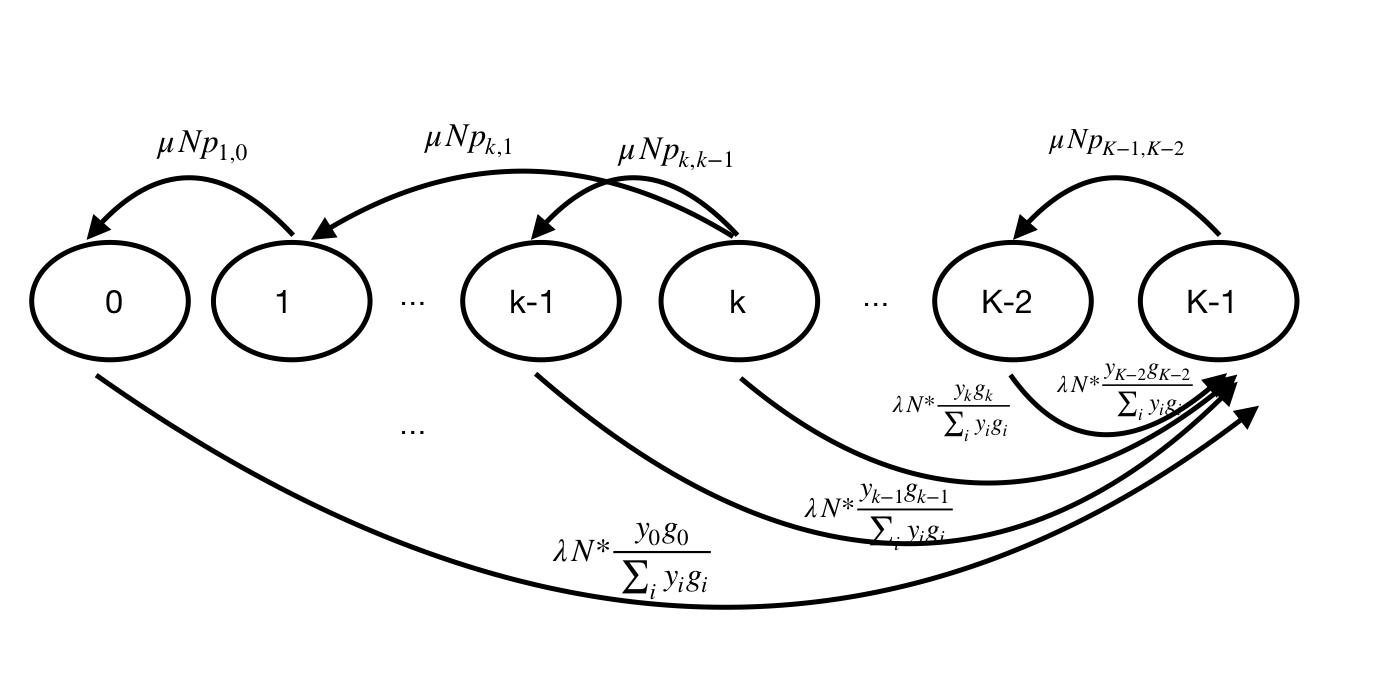}
		\end{center}
		\vspace{-.02in}
\caption{Diagram of transitions of states for the empirical process $Y^N(t)$.} \label{model1_diagram}
\end{figure}

Despite some complexity of our model, it is not completely realistic as we make the battery usage transitions instantaneous.  What follows in the sequel is a generalization of our first model where the battery life transitions have an exponential distribution.  

\subsection{Model 2: Exponentially Distributed Battery Usage Time}\label{model_2}	

In this subsection, we propose a different model for the empirical process of battery life where battery usage time is considered to be exponentially distributed with rate $\mu_U$. We use the same notation from Section \ref{model_1} and introduce a new variable $R^N(t)$ as the number of e-scooters in use (riding by customers) at time $t$. Table \ref{table:nonlin2} summarizes the additional notation we need for this new model.

\begin{table}[H]
\caption{Summary of Additional Model 2 Notation} 
\centering 
\begin{tabular}{c c } 
\hline\hline 

\hline 
$1/\mu_U$ & Mean trip duration\\
$R^N(t)$ & Number of e-scooters in use at time $t$\\
$X^N(t)$ & Proportion of e-scooters in use at time $t$\\
\hline 
\end{tabular}
\label{table:nonlin2} 
\end{table}

\subsubsection{Modeling Assumptions}

Now we describe the following modeling assumptions we make for this new model.  

\paragraph{Customer Arrival (battery usage):}
We assume that customers arrive to the system following a Poisson process with rate $\mu (N-R^N(t))$ (uniform on geographical location). Only e-scooters with battery life above a certain threshold $K_U/K$ can be picked up and used by the customer. After the customer picks up the e-scooter, they will ride the e-scooter for a time that is exponentially distributed with rate $\mu_U$, and after this the battery life changes immediately according to a probability matrix $P=(p_{ij})_{ij}$.

\paragraph{Swapper Arrival:}
We assume that swappers arrive to the system following a Poisson process with rate $\lambda N^*\left(1-\frac{R^N(t)}{N}\right)$, which is proportional to the number of e-scooters available at the time. The probability of a e-scooter with battery life in bucket $k/K$ getting swapped is based on a choice model $\frac{Y_k^N(t)g_k}{\sum_{i=0}^{K-1}Y_i^N(t)g_i}$, where $\{g_i\}_{i=0}^{K-1}>0$ is a decreasing sequence on $i$. For simplicity of the model, we assume that after the swapper picks up the e-scooter, the battery life jumps to full immediately (i.e. neglecting the swapping time).

\subsubsection{Markov Jump Process}
Now that we have described the dynamics of the model, we are now free to construct our empirical process model.  To this end, we define the fraction of e-scooters in use as 
$$X^N(t)=\frac{R^N(t)}{N}$$

By conditioning on $(X^N(t),Y^{N}(t))=(x,y)$, the transition rates of $(x,y)$ are specified as follows:
\paragraph{Battery Swapping:}
When there is a swapper arriving to the system to swap the battery of an e-scooter with battery life in the interval $[\frac{k}{K},\frac{k+1}{K}]$, the proportion of e-scooters with battery life in the interval $[\frac{k}{K},\frac{k+1}{K}]$ goes down by $1/N$, the proportion of e-scooters with battery life in the interval$[\frac{K-1}{K},1]$ goes up by $1/N$, and the transition rate $Q^N$ is 
\begin{eqnarray}
Q^{N}\left((x,y),\left(x,y+\frac{1}{N}(\mathbf{1}_{K-1}-\mathbf{1}_{k})\right) \right) &=&  \lambda N^* (1-x)\frac{y_kg_k}{\sum_{i=0}^{K-1}y_ig_i}
\end{eqnarray}

\paragraph{Customer Arrival:}
When there is a customer arriving to the system to pick up a e-scooter with battery life in the interval $[\frac{k}{K},\frac{k+1}{K}]$ where $k\geq K_U$,, the proportion of e-scooters in use goes up by $1/N$, and the transition rate $Q^N$ is
\begin{eqnarray}
Q^{N}\left((x,y),\left(x+\frac{1}{N},y\right) \right) &=&  \mu N(1-x)\sum_{k=K_U}^{K-1}y_k
\end{eqnarray}

\paragraph{Battery Usage:}
When there is a customer riding a e-scooter with battery life in the interval $[\frac{k}{K},\frac{k+1}{K}]$ where $k\geq K_U$, the proportion of e-scooters with battery life in the interval $[\frac{k}{K},\frac{k+1}{K}]$ goes down by $1/N$, with probability $p_{kj}$ the  proportion of e-scooters with battery life in the interval $[\frac{j}{K},\frac{j+1}{K}]$ goes up by $1/N$, and the proportion of e-scooters in use goes down by $1/N$, and the transition rate $Q^N$ is 
\begin{eqnarray}
Q^{N} \left((x,y),\left(x-\frac{1}{N},y+\frac{1}{N}(\mathbf{1}_{j}-\mathbf{1}_{k})\right) \right) &=&
\mu_U Nx p_{kj} y_k \mathbf{1}\{k\geq K_U\} \mathbf{1}\{j\leq k\}.
\end{eqnarray}
We the above transition rates, we have that $(X^N(t),Y^N(t))$ is a Markov jump process.  In Figure \ref{model2_diagram}, we illustrate the transitions between states in the model proposed. Note that in order to make the illustration easier to understand, we break down $X^N(t)$ into each battery interval. Since we assume uniform arrival to scooters with battery life in all levels, we do not need to track the proportion of e-scooters in use in each battery bucket for the model to be Markovian. However, it will be needed when arrival rate depends on the battery life of e-scooters, in which case we can extend the state space to 
$$X_{u, k}^N(t)=\frac{1}{N}\sum_{i=1}^{N}\mathbf{1}\left\{\frac{k}{K}\leq B_i(t) < \frac{k+1}{K}, u_i(t)=u\right\}, \quad k=0,\cdots, K-1, u=0,1$$ 
where $u_i(t)=1$ denotes scooter $i$ is in use at time $t$ and $u_i(t)=0$ indicates when the $i^{th}$ e-scooter is idle at time $t$.

\begin{figure}[H]
\captionsetup{justification=centering}
\begin{center}
		\includegraphics[scale = 0.6]{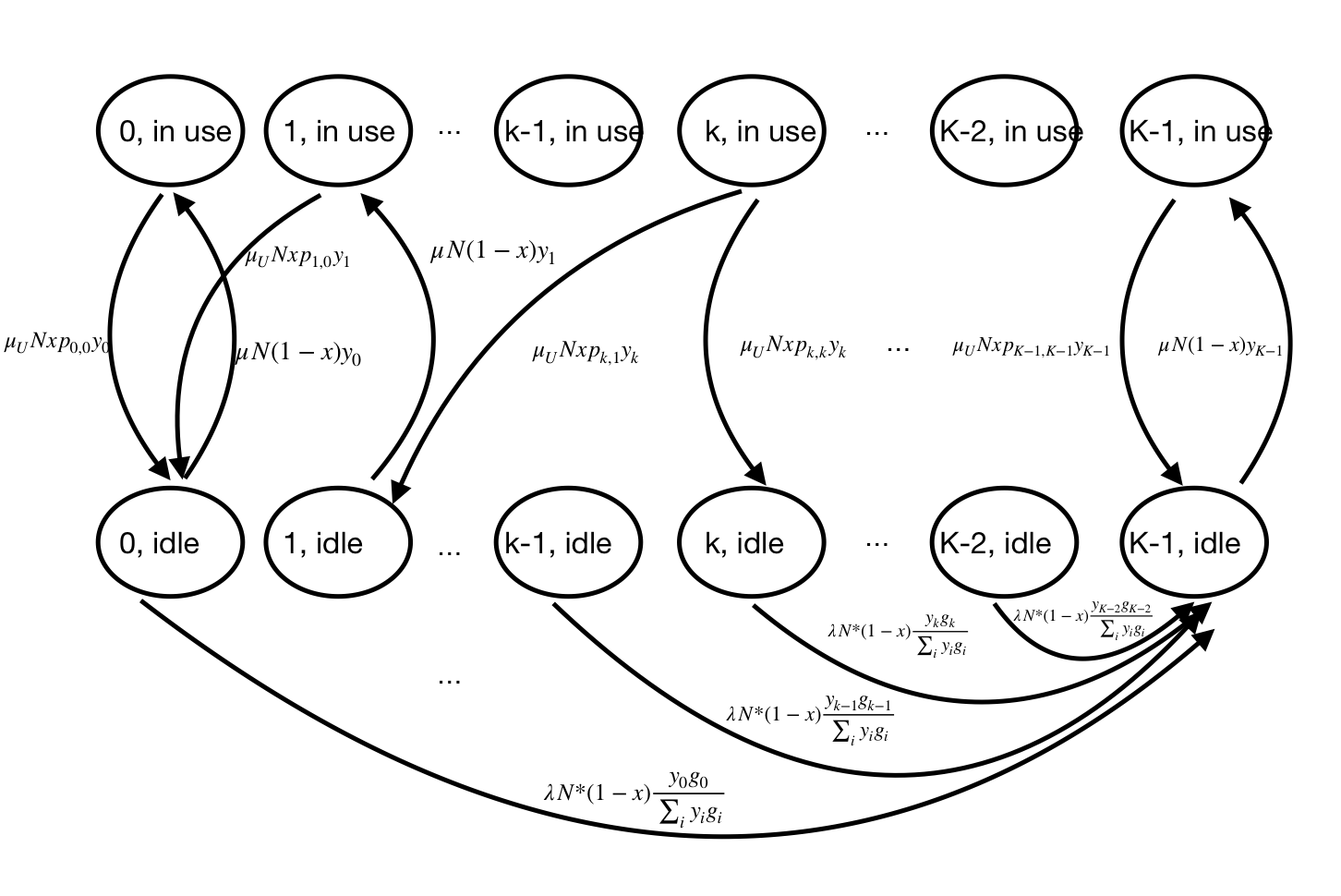}
		\end{center}
		\vspace{-.02in}
\caption{Diagram of transitions of states for the empirical process $(X^N(t),Y^N(t))$} \label{model2_diagram}
\end{figure}

Now that we have two models for the dynamic behavior of e-scooter systems, we want to understand some important behavior of the system.  Since the system is quite large and is not easy to analyze directly, we resort to using asymptotic analysis.  Thus, in the sequel we will prove mean field and central limit theorems for describe the mean and variance dynamics of the e-scooter system.  


\section{Mean Field Limit of Empirical Processes} \label{Mean_Field_Limit}

In this section, we prove the mean field limit for both of our empirical process of e-scooters battery life models.  A mean field limit describes the large system dynamics of the e-scooters battery life and usage over time.  Deriving the mean field limit allows us to gain insight about the average system behavior when the number of e-scooters is very large. Thus, we avoid the need to study an $N$-dimensional continuous time Markov chain and compute its steady state distribution in this high dimensional setting, which is quite intractable.  We first state the mean field limit result for the stochastic model described in Section \ref{model_1}, the empirical process of e-scooter battery life with instantaneous battery usage.

\begin{theorem}[Functional Law of Large Numbers]\label{fluid_limit}
Let $|. |$ denote the Euclidean norm in $\mathbb{R}^{K}$. Suppose that $\lim_{N\rightarrow \infty}\frac{N^*}{N}=\gamma$, and $Y^{N}(0)\xrightarrow{p} y(0)$,  then we have for $\forall \epsilon>0$
$$\lim_{N\rightarrow \infty}P\left(\sup_{t\leq t_0}|Y^N(t)-y(t)|>\epsilon\right)=0,$$
where $y(t)$ is the unique solution to the following differential equation starting at $y(0)$,
\begin{equation}\label{diff_eqn}
\shortdot{y}=f(y)
\end{equation}

where $f:[0,1]^{K}\rightarrow \mathbb{R}^{K}$ is a vector field that satisfies
\begin{eqnarray}\label{eqn:b}
f(y)&=&\sum_{k=0}^{K-1}\left[ \left(  \frac{\lambda \gamma g_k}{\sum_{i=0}^{K-1}y_ig_i} \right)(\mathbf{1}_{K-1}-\mathbf{1}_{k})+\sum_{j=0}^{k}\mu p_{kj} (\mathbf{1}_{j}-\mathbf{1}_{k})\mathbf{1}_{k\geq K_U}\right] y_k\nonumber \\
\end{eqnarray}
or componentwise for $0\leq j \leq K-1$,
\begin{eqnarray}
f_j(y)&=&\underbrace{\sum_{k=\max(K_U,j)}^{K-1}\mu p_{k,j}y_k}_{\text{battery usage from scooters in $k$-th bucket}} +\underbrace{\lambda \gamma \mathbf{1}\{j=K-1\}}_{\text{battery recharge to full}}-\underbrace{\mu y_j \mathbf{1}\{j\geq K_U\}}_{\text{battery usage from scooters in $j$-th bucket}}\nonumber\\
& &-\underbrace{\frac{\lambda \gamma y_jg_j}{\sum_{i=0}^{K-1}y_ig_i}}_{\text{battery recharge to scooters in $j$-th bucket}} .
\end{eqnarray}
\end{theorem}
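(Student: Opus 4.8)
The process $Y^N$ is a density-dependent Markov jump process living on the simplex $\Delta^K := \{y \in \mathbb{R}_{\geq 0}^K : \sum_{k=0}^{K-1} y_k = 1\}$: every transition, whether a swap ($\Delta = \mathbf{1}_{K-1}-\mathbf{1}_k$) or a ride ($\Delta = \mathbf{1}_j-\mathbf{1}_k$, $j\le k$), preserves $\sum_k y_k$, and the rate of any transition that decreases $y_k$ is proportional to $y_k$; hence $Y^N(t) \in \Delta^N := \Delta^K \cap \tfrac1N\mathbb{Z}^K$ for all $t$ whenever $Y^N(0)\in\Delta^N$. The plan is the classical Kurtz-type argument: a martingale decomposition of $Y^N$, a Doob/quadratic-variation bound to kill the martingale part, identification of the limiting drift $f$ together with its Lipschitz property on $\Delta^K$, and finally Gronwall's inequality.

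First I would write, via Dynkin's formula (equivalently a random-time-change/Poisson representation),
\begin{equation*}
Y^N(t) = Y^N(0) + \int_0^t F^N\!\big(Y^N(s)\big)\,ds + M^N(t),
\end{equation*}
where $F^N(y) = \sum_{\ell} \tfrac{\Delta_\ell}{N}\,Q^N\!\big(y, y+\tfrac{\Delta_\ell}{N}\big)$ is the scaled drift obtained by pairing each jump direction $\Delta_\ell$ with its rate, and $M^N$ is a mean-zero martingale. Since every jump has size $O(1/N)$ and the total jump rate is $O(N)$ uniformly on $\Delta^K$ (swaps contribute at most $\lambda N^*$, rides at most $\mu N$), the predictable quadratic variation obeys $\langle M^N_j\rangle_{t_0} \le C t_0/N$ for each coordinate $j$, so Doob's $L^2$ inequality gives $\mathbb{E}\big[\sup_{t\le t_0}|M^N(t)|^2\big] \le C' t_0 / N \to 0$, whence $\sup_{t\le t_0}|M^N(t)| \xrightarrow{p} 0$.

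Next I would identify the limit. Writing $\gamma_N = N^*/N \to \gamma$, a direct computation collecting the swap terms $\lambda\gamma_N \tfrac{y_k g_k}{\sum_i y_i g_i}(\mathbf{1}_{K-1}-\mathbf{1}_k)$ and the ride terms $\mu p_{kj} y_k(\mathbf{1}_j-\mathbf{1}_k)\mathbf{1}\{k\ge K_U\}$ shows $F^N \to f$ uniformly on $\Delta^K$, with $f$ as in (\ref{eqn:b}); there is no Taylor remainder because the coordinate maps are affine in $y$, the only $N$-dependence being through $\gamma_N$. The key structural fact — and precisely the role of the choice-model hypothesis — is that $f$ is globally Lipschitz on $\Delta^K$: the sole non-affine term is the rational function $y\mapsto \tfrac{y_j g_j}{\sum_i y_i g_i}$, whose denominator is bounded below by $(\min_i g_i)\sum_i y_i = \min_i g_i > 0$ on $\Delta^K$, so it is smooth with bounded gradient there. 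One also checks that $\sum_j f_j(y)=0$ on $\{\sum_i y_i=1\}$ (using $\sum_{j\le k} p_{kj}=1$) and that $f_j(y)\ge 0$ whenever $y_j=0$, so $\Delta^K$ is forward-invariant for (\ref{diff_eqn}); combined with the Lipschitz bound, Picard–Lindelöf yields existence and uniqueness of $y(\cdot)$ in $\Delta^K$.

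Finally I would close the loop. Subtracting the integral equation for $y$ from that for $Y^N$ and using that $Y^N(s), y(s) \in \Delta^K$,
\begin{equation*}
|Y^N(t) - y(t)| \le |Y^N(0)-y(0)| + \sup_{s\le t_0}|M^N(s)| + t_0\|F^N-f\|_{\infty,\Delta^K} + L\int_0^t |Y^N(s)-y(s)|\,ds,
\end{equation*}
where $L$ is the Lipschitz constant of $f$ on $\Delta^K$. Gronwall's lemma then gives
\begin{equation*}
\sup_{t\le t_0}|Y^N(t)-y(t)| \le \Big(|Y^N(0)-y(0)| + \sup_{s\le t_0}|M^N(s)| + t_0\|F^N-f\|_{\infty,\Delta^K}\Big)e^{Lt_0},
\end{equation*}
and each term on the right tends to $0$ — the first by the hypothesis $Y^N(0)\xrightarrow{p} y(0)$, the second by the martingale bound above, the third deterministically — so the left side converges to $0$ in probability, as claimed. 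I expect the only real subtlety to be the interplay between (i) confirming that the trajectories $Y^N(\cdot)$ and $y(\cdot)$ never leave $\Delta^K$ and (ii) the Lipschitz estimate for the choice-model drift, which fails on all of $[0,1]^K$; both become routine once the invariance of the simplex is recorded.
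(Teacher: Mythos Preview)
Your proposal is correct and follows essentially the same Kurtz-type route as the paper: semi-martingale decomposition of $Y^N$, a Doob $L^2$ bound on the martingale part via its $O(1/N)$ quadratic variation, uniform convergence $F^N\to f$ (driven solely by $N^*/N\to\gamma$), Lipschitz continuity of $f$, and Gronwall to close. Your explicit recording of simplex invariance and the restriction of the Lipschitz estimate to $\Delta^K$ (where $\sum_i y_i g_i\ge \min_i g_i>0$) is in fact slightly more careful than the paper, which states the Lipschitz bound on all of $[0,1]^K$ without justifying the denominator control there, but otherwise the arguments coincide.
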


\begin{proof}
The full proof is provided in the Appendix (Section \ref{Appendix}).
\end{proof}

Now we state the  the mean field limit result for the stochastic model described in Section \ref{model_2}, the empirical process of e-scooter battery life with exponentially distributed battery usage time.

\begin{theorem}[Functional Law of Large Numbers]\label{fluid_limit_model2}
Let $|. |$ denote the Euclidean norm in $\mathbb{R}^{K+1}$. Suppose that $\lim_{N\rightarrow \infty}\frac{N^*}{N}=\gamma$, and $(X^N(0),Y^{N}(0))\xrightarrow{p} (x(0),y(0))$,  then we have for $\forall \epsilon>0$
$$\lim_{N\rightarrow \infty}P\left(\sup_{t\leq t_0}|(X^N(t), Y^N(t))-(x(t),y(t))|>\epsilon\right)=0$$
where $(x(t),y(t))$ is the unique solution to the following differential equation starting at $(x(0), y(0))$
\begin{eqnarray}\label{diff_eqn}
\shortdot{x}&=&f_x(x, y),\\
\shortdot{y}&=&f_y(x, y),
\end{eqnarray}

where $f=(f_x,f_y):[0,1]\times [0,1]^{K}\rightarrow \mathbb{R}\times \mathbb{R}^{K}$ is a vector field that satisfies
\begin{eqnarray}
f_x(x,y)=\underbrace{\mu(1-x)\sum_{k=K_U}^{K-1}y_k}_{\text{customers picking up scooters}} -\underbrace{\mu_U x \sum_{k=K_U}^{K-1}y_k}_{\text{customers drop off scooters}},
\end{eqnarray}

\begin{eqnarray}
f_y(x,y)&=&\sum_{k=0}^{K-1}\left[ \left(  \frac{\lambda \gamma (1-x) g_k}{\sum_{i=0}^{K-1}y_ig_i} \right)(\mathbf{1}_{K-1}-\mathbf{1}_{k})+\sum_{j=0}^{k}\mu_U x p_{kj} (\mathbf{1}_{j}-\mathbf{1}_{k})\mathbf{1}_{k\geq K_U}\right] y_k,\nonumber \\
\end{eqnarray}

or componentwise for $0\leq j \leq K-1$,
\begin{eqnarray}
f_y(x,y)(j)&=&\underbrace{\sum_{k=\max(K_U,j)}^{K-1}\mu _U x p_{k,j}y_k}_{\text{battery usage from scooters in $k$-th bucket}} +\underbrace{\lambda \gamma(1-x) \mathbf{1}\{j=K-1\}}_{\text{battery recharge to full}}\nonumber\\
& &-\underbrace{\mu_U x y_j \mathbf{1}\{j\geq K_U\}}_{\text{battery usage from scooters in $j$-th bucket}}-\underbrace{\frac{\lambda \gamma(1-x) y_jg_j}{\sum_{i=0}^{K-1}y_ig_i}}_{\text{battery recharge to scooters in $j$-th bucket}}.
\end{eqnarray}
\end{theorem}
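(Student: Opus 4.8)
The plan is to prove Theorem~\ref{fluid_limit_model2} by the same density-dependent Markov chain argument used for Theorem~\ref{fluid_limit}, carried out on the augmented state space. First I would record the two structural facts that make the domain well behaved: the four transition types all preserve $\sum_{k=0}^{K-1} Y^N_k(t)$, and the customer-arrival rate $\mu N(1-X^N(t))\sum_{k\geq K_U} Y^N_k(t)$ vanishes at $X^N=1$ while the battery-usage transitions only decrease $R^N$, so $X^N(t)\in[0,1]$ always. Hence $(X^N(t),Y^N(t))$ stays in the compact set $\mathcal{S}=\{(x,y): x\in[0,1],\ y\in[0,1]^K,\ \sum_k y_k = 1\}$ whenever the initial state does. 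On $\mathcal{S}$ the choice-model denominator satisfies $\sum_{i=0}^{K-1} y_i g_i \ge \min_i g_i>0$, so every transition rate divided by $N$ is a bounded, globally Lipschitz function of $(x,y)$; summing the finitely many jump vectors $\Delta_e$ against these normalized rates produces a drift $F^N \to f=(f_x,f_y)$ uniformly on $\mathcal{S}$, with $f$ globally Lipschitz on $\mathcal{S}$ (Lipschitz constant $L$).

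Next I would use the random-time-change representation
\begin{equation*}
(X^N(t),Y^N(t)) = (X^N(0),Y^N(0)) + \sum_e \frac{\Delta_e}{N}\,\mathcal{N}_e\!\left(N\int_0^t \beta_e^N(X^N(s),Y^N(s))\,ds\right),
\end{equation*}
with independent unit-rate Poisson processes $\mathcal{N}_e$, and split each $\mathcal{N}_e$ into its compensator plus a martingale to obtain
\begin{equation*}
(X^N(t),Y^N(t)) = (X^N(0),Y^N(0)) + \int_0^t F^N(X^N(s),Y^N(s))\,ds + M^N(t).
\end{equation*}
Because each jump has size $O(1/N)$ and the aggregate jump rate is $O(N)$, the quadratic variation of $M^N$ over $[0,t_0]$ is $O(1/N)$, so Doob's $L^2$ inequality gives $\E{\|M^N\|_{t_0}^2}=O(1/N)$ and hence $\|M^N\|_{t_0}\xrightarrow{p}0$.

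Subtracting the integral form of the ODE and applying the Lipschitz bound on $f$ gives
\begin{equation*}
\|(X^N,Y^N)-(x,y)\|_t \le \varepsilon_N + t_0\,\|F^N-f\|_{\infty,\mathcal{S}} + \|M^N\|_{t_0} + L\int_0^t \|(X^N,Y^N)-(x,y)\|_s\,ds,
\end{equation*}
where $\varepsilon_N := |(X^N(0),Y^N(0))-(x(0),y(0))|\xrightarrow{p}0$ by assumption; Grönwall then yields $\|(X^N,Y^N)-(x,y)\|_{t_0}\le (\varepsilon_N + t_0\|F^N-f\|_{\infty,\mathcal{S}}+\|M^N\|_{t_0})e^{Lt_0}\xrightarrow{p}0$, which is the assertion. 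Existence and uniqueness of $(x,y)$ follows from Picard–Lindelöf applied to the Lipschitz field $f$ on the compact, forward-invariant $\mathcal{S}$; forward invariance is checked directly from the componentwise formulas, since $f_x\ge0$ when $x=0$ and $f_x\le0$ when $x=1$, while $f_y(x,y)(j)\ge0$ whenever $y_j=0$ and $\sum_j f_y(x,y)(j)=0$.

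The only step requiring real care — rather than bookkeeping inherited from the proof of Theorem~\ref{fluid_limit} — is establishing that the choice-model term $y\mapsto y_kg_k/\sum_i y_ig_i$ (and therefore $F^N$ and $f$) is Lipschitz; this is precisely where the standing assumption $\min_i g_i>0$ is used, as it bounds the denominator away from zero uniformly on the simplex. Beyond that, the argument is the Model~1 proof with the extra scalar coordinate $x$ appended and with $\mu_U x$ replacing the instantaneous usage rate $\mu$ in the battery-drain terms and $\mu(1-x)$, $\lambda\gamma(1-x)$ replacing the arrival rates.
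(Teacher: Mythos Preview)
Your proposal is correct and follows essentially the same route as the paper: the paper's own proof of Theorem~\ref{fluid_limit_model2} simply states that the argument for Theorem~\ref{fluid_limit} carries over to the augmented state, and your outline does exactly that --- semi-martingale (random-time-change) decomposition, Doob's $L^2$ bound on the martingale part, the Lipschitz property of $f$ on the simplex (relying on $\min_i g_i>0$ to bound the choice-model denominator), and Gronwall. The only cosmetic difference is that you phrase the martingale piece via Kurtz's Poisson time-change representation, whereas the paper writes it as the Dynkin semi-martingale decomposition with the auxiliary events $\Omega_0,\dots,\Omega_3$; the underlying estimates are identical.
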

\begin{proof}
The proof ideas for Theorem \ref{fluid_limit_model2} follow easily from the proof of Theorem \ref{fluid_limit} so we do not prove them in this paper. 
\end{proof}

Both mean field limits provide insights to the stochastic model, by providing ordinary differential equations that describe the mean proportion of scooters in a particular interval of battery life.  In the first model, the dimension is reduced from $N$ scooters to $K$ intervals where $K$ is generally much lower than $N$.  In the second model, the dimension is only increased to $K+1$, which is still much smaller than $N$, the number of e-scooters.  We will describe in the sequel how these mean field limits can be used to construct staffing algorithms for agents who will swap out the batteries when they are low.  However, the mean field limits only describe the mean dynamics of the stochastic models and say nothing about the stochastic fluctuations around the mean field limits.  In the next section, we prove central limit theorems, centering around the mean field limits for our stochastic models.  The central limit theorems will provide some rigorous support for confidence intervals around the mean field limits.  

\section{Central Limit Theorem of Empirical Process} \label{Central_Limit}

In this section, we derive the diffusion limit of our stochastic empirical process of scooters battery life model.  Diffusion limits are critical for obtaining a deep understanding of the sample path behavior of stochastic processes around their mean.  One reason is that diffusion limits describe the fluctuations around the mean or mean field limit and can help understand the variance or the asymptotic distribution of the stochastic process being analyzed.  We define our diffusion scaled e-scooters sharing model by subtracting the mean field limit from the scaled stochastic process and rescaling it by $\sqrt{N}$.  Thus, we obtain the following expression for the diffusion scaled scooter battery life empirical process
\begin{equation}
D^{N}(t)=\sqrt{N}(Y^{N}(t)-y(t)).
\end{equation}
Now we state the functional central limit theorem for the empirical process described in Section \ref{model_1}.
\begin{theorem}[Functional Central Limit Theorem]\label{difftheorem}
Consider $D^{N}(t)$ in $\mathbb{D}(\mathbb{R}_{+},\mathbb{R}^{K})$ with the Skorokhod $J_{1}$ topology, and suppose that 
$\limsup_{N\rightarrow \infty}\sqrt{N}\left(\frac{N^*}{N}-\gamma\right)< \infty$. 
If $D^{N}(0)$ converges in distribution to $D(0)$, then $D^{N}(t)$ converges to the unique Ornstein Uhlenbeck (OU) process solving $D(t)=D(0)+\int_{0}^{t}f'(y(s))D(s) ds+M(t)$ in distribution, where $f'(y)$ is specified as follows,
\begin{eqnarray}\label{b'}
\frac{\partial f_j(y)}{\partial y_k}&=&\mu p_{k,j}\mathbf{1}\{k\geq \max\{j,K_U\}\}-\mu \mathbf{1}\{k=j\geq K_U\}\nonumber\\
& &+\frac{\lambda \gamma g_kg_jy_j}{(\sum_{i=0}^{K-1}y_ig_i)^2}-\frac{\lambda \gamma g_j}{\sum_{i=0}^{K-1}y_ig_i}\mathbf{1}\{j= k\},
\end{eqnarray}
and $M(t) = (M_{0}(t),\cdots,M_{K-1}(t))\in \mathbb{R}^{K}$ is a real continuous centered Gaussian martingale, with Doob-Meyer brackets given by
\begin{eqnarray}
\boldlangle M_{k}(t),M_{j}(t)\boldrangle 
&=&\begin{cases}
\int_{0}^{t}\left[\sum_{i=\max(K_U,k+1)}^{K-1}\mu p_{i,k}y_i(s) +\left(\mu(1-p_{k,k})\mathbf{1}_{\{k\geq K_U\}}\right.\right.\nonumber\\
+\left.\left.\frac{\lambda \gamma g_k}{\sum_{i=0}^{K-1}y_i(s)g_i} \right)y_k(s)\right]ds, & k=j<K-1 \nonumber\\
\int_{0}^{t}\left[\mu(1-p_{K-1,K-1})y_{K-1}(s)+\lambda \gamma\left(1- \frac{g_{K-1}y_{K-1}(s)}{\sum_{i=0}^{K-1}y_i(s)g_i}\right)\right]ds, & k=j=K-1\\
-\int_{0}^{t} \left[\mu p_{k,j}y_k(s)+\frac{\lambda \gamma g_j}{\sum_{i=0}^{K-1}y_i(s)g_i}y_j(s)\mathbf{1}\{k=K-1\}\right]ds,  & j<k, k\geq K_U\nonumber\\
0. & \text{otherwise}
\end{cases}
\end{eqnarray}
Define $\mathcal{A}(t)=f'(y(t))$, $\mathcal{B}(t)=\left(\frac{d}{dt}\boldlangle M_i(t),M_j(t)\boldrangle  \right)_{ij}$, then the covariance matrix $\Sigma (t)=\mathrm{Cov}[D(t),D(t)]$ satisfies
\begin{equation}
\frac{d\Sigma(t)}{dt}=\Sigma(t)\mathcal{A}(t)^\top+\mathcal{A}(t)\Sigma(t)+\mathcal{B}(t).
\end{equation}
Moreover, componentwise,  for $i=0,1,\cdots, K-2$,
\begin{eqnarray}
\frac{d\Sigma_{ii}(t)}{dt} &=&2\sum_{k=\max\{i,K_U\}}^{K-1}\Sigma_{ik}\mu p_{k,i}-2\left( \frac{\lambda \gamma g_i}{\sum_{i=0}^{K-1}y_ig_i}+\mu \mathbf{1}\{i\geq K_U\}\right)\Sigma_{ii} +2\sum_{k=0}^{K-1}\Sigma_{ik}\frac{\lambda \gamma g_kg_iy_i}{(\sum_{i=0}^{K-1}y_ig_i)^2}\nonumber\\
& &+\sum_{k=\max(i+1,K_U)}^{K-1}\mu p_{k,i}y_k +\left(\mu(1-p_{i,i})\mathbf{1}\{i\geq K_U\}+\frac{\lambda \gamma g_i}{\sum_{i=0}^{K-1}y_ig_i} \right)y_i,
\end{eqnarray}
 and for $i=K-1$,
\begin{eqnarray}
\frac{d\Sigma_{K-1,K-1}(t)}{dt} 
&=&-2\left( \frac{\lambda \gamma g_{K-1}}{\sum_{i=0}^{K-1}y_ig_i}+\mu \right)\Sigma_{ii}+2\sum_{k=0}^{K-1}\Sigma_{ik}\frac{\lambda \gamma g_kg_{K-1}y_{K-1}}{(\sum_{i=0}^{K-1}y_ig_i)^2}+\mu(1-p_{K-1,K-1})y_{K-1}\nonumber\\
& &+\lambda \gamma\left(1- \frac{g_{K-1}y_{K-1}}{\sum_{i=0}^{K-1}y_ig_i}\right) ,
\end{eqnarray}
for $0\leq i<j\leq K-1$,
\begin{eqnarray}
\frac{d\Sigma_{ij}(t)}{dt}
&=&\sum_{k=\max\{j,K_U\}}^{K-1}\Sigma_{ik}\mu p_{k,j}-\left( \frac{\lambda \gamma g_j}{\sum_{i=0}^{K-1}y_ig_i}+\mu \mathbf{1}\{j\geq K_U\}\right)\Sigma_{ij}+\sum_{k=0}^{K-1}\Sigma_{ik}\frac{\lambda \gamma g_kg_jy_j}{(\sum_{i=0}^{K-1}y_ig_i)^2}\nonumber\\
& &\sum_{k=\max\{i,K_U\}}^{K-1}\Sigma_{jk}\mu p_{k,i}-\left( \frac{\lambda \gamma g_i}{\sum_{i=0}^{K-1}y_ig_i}+\mu \mathbf{1}\{i\geq K_U\}\right)\Sigma_{ij}+\sum_{k=0}^{K-1}\Sigma_{jk}\frac{\lambda \gamma g_kg_iy_i}{(\sum_{i=0}^{K-1}y_ig_i)^2}\nonumber\\
& &-\mu p_{j,i}y_j\mathbf{1}\{j\geq K_U\}-\frac{\lambda \gamma g_i}{\sum_{i=0}^{K-1}y_ig_i}y_i\mathbf{1}\{j=K-1\}.
\end{eqnarray}

\begin{proof}
In order to prove Theorem \ref{difftheorem}, we need to prove the following four results listed below step by step.  
  
\begin{itemize}
\item[1).](Lemma \ref{martingale_brackets})$\sqrt{N}M^{N}(t)$ is a family of martingales independent of $D^{N}(0)$ with Doob-Meyer brackets given by
\begin{eqnarray}
& &\boldlangle \sqrt{N}M^{N}_k(t),\sqrt{N}M^{N}_j(t)\boldrangle\nonumber\\
&=&\begin{cases}
\int_{0}^{t}\left[\sum_{i=\max(K_U,k+1)}^{K-1}\mu p_{i,k}Y^N_i(s) +\left(\mu(1-p_{k,k})\mathbf{1}_{\{k\geq K_U\}}\right.\right. & \nonumber\\
+\left.\left.\lambda N^*\frac{ g_k}{N\sum_{i=0}^{K-1}Y^N_i(s)g_i} \right)Y^N_k(s)\right]ds, & k=j<K-1 \nonumber\\
\int_{0}^{t}\left[\mu(1-p_{K-1,K-1})Y_{K-1}^N(s)+\lambda \frac{N^*}{N}\left(1- \frac{g_{K-1}Y_{K-1}^N(s)}{\sum_{i=0}^{K-1}Y_i^N(s)g_i}\right)\right]ds, & k=j=K-1\\
-\int_{0}^{t} \left[\mu p_{k,j}Y^N_k(s)+\lambda N^*\frac{ g_j}{N\sum_{i=0}^{K-1}Y_i^N(s)g_i}Y_j^N(s)\mathbf{1}\{k=K-1\}\right]ds,  & j<k, k\geq K_U\nonumber\\
0. & \text{otherwise}
\end{cases}
\end{eqnarray}

\item[2).](Lemma \ref{L2bound}) For any $T\geq 0$, $$\limsup_{N\rightarrow \infty}\mathbb{E}(|D^{N}(0)|^2)<\infty \Rightarrow \limsup_{N\rightarrow \infty}\mathbb{E}(\sup_{0\leq t\leq T}|D^{N}(t)|^2)<\infty. $$
\item[3).] (Lemma \ref{tightness}) If $(D^{N}(0))_{N=1}^{\infty}$ is tight then $(D^{N})_{N=1}^{\infty}$ is tight and its limit points are continuous.
\item[4).]	If $D^{N}(0)$ converges to $D(0)$ in distribution, then $D^{N}(t)$ converges to the unique OU process solving $D(t)=D(0)+\int_{0}^{t}f'(y(s))D(s) ds+M(t)$ in distribution.
\end{itemize}
We provide the proofs of Lemma \ref{martingale_brackets}, Lemma \ref{L2bound} and Lemma \ref{tightness} in the Appendix (Section \ref{Appendix}).   For step 4),  by Theorem 4.1 in Chapter 7 of \citet{Ethier2009}, it suffices to prove the following conditions hold
\begin{enumerate}
\item [a).]
\begin{equation}\label{condition_1}
\lim_{N\rightarrow \infty}\mathbb{E}\left[\sup_{t\leq T}|D^N(t)-D^N(t-)|^2\right]=0,
\end{equation}
\item  [b).]
\begin{equation}\label{condition_2}
\lim_{N\rightarrow \infty}\mathbb{E}\left[\sup_{t\leq T}\left|\int_{0}^{t}F^N(Y^N(s))ds-\int_{0}^{t-}F^N(Y^N(s))ds\right|^2\right]=0,
\end{equation}
\item  [c).] for $0\leq k,j\leq K-1$,
\begin{equation}\label{condition_3}
\lim_{N\rightarrow \infty}\mathbb{E}\left[\sup_{t\leq T}\left|\boldlangle \sqrt{N}M^{N}_k(t),\sqrt{N}M^{N}_j(t)\boldrangle-\boldlangle \sqrt{N}M^{N}_k(t-),\sqrt{N}M^{N}_j(t-)\boldrangle\right|^2\right]=0,
\end{equation}
\item [d).] for $0\leq k,j\leq K-1$,
\begin{equation}\label{condition_4}
\sup_{t\leq T}\left|\boldlangle \sqrt{N}M^{N}_k(t),\sqrt{N}M^{N}_j(t)\boldrangle-\boldlangle M_k(t),M_j(t)\boldrangle\right|\xrightarrow{p} 0, 
\end{equation}
\item  [e).]
\begin{equation}\label{condition_5}
\sup_{t\leq T}\left|\int_{0}^{t}\left\{\sqrt{N}[f(Y^N(s))-f(y(s))]-f'(y(s))D^{N}(s)\right\}ds\right|\xrightarrow{p} 0.
\end{equation}

\end{enumerate}

Condition (\ref{condition_1}) is easy to show by the fact that $D^N(t)$ has jump size of $1/\sqrt{N}$. Condition (\ref{condition_2}) follows by the fact that $F^N(y)$ is a Lipschitz  function of $y$ and that condition (\ref{condition_1}) holds. By Lemma \ref{martingale_brackets} and the fact that $Y^N(t)$ has jump size of $1/N$, it is also easy to show that condition (\ref{condition_3}) holds. For condition (\ref{condition_4}), it follows from Proposition \ref{driftbound} and Lemma \ref{martingale_brackets} (see for example proof of Equation (\ref{martingale_brackets_convergence}) for details).

Finally, to show condition (\ref{condition_5}), by Equation (\ref{b'}) we know that $f(y(t))$ is continuously differentiable with respect to $y(t)$. By the mean value theorem, for every $0\leq s\leq t$ there exists a vector $Z^{N}(s)$ in between $Y^{N}(s)$ and $y(s)$ such that
$$f(Y^{N}(s))-f(y(s))=f'(Z^{N}(s))(Y^{N}(s)-y(s)).$$
Therefore,
$$\int_{0}^{t}\left\{\sqrt{N}[f(Y^{N}(s))-f(y(s))]-f'(y(s))D^{N}(s)\right\}ds=\int_{0}^{t}[f'(Z^{N}(s))-f'(y(s))]D^{N}(s)ds.$$
We know that $$\lim_{N\rightarrow \infty}\sup_{t\leq T}|f'(Z^{N}(s))-f'(y(s))|=0\quad \text{in probability}$$
by the mean field limit convergence (Theorem \ref{fluid_limit}) and the uniform continuity of $f'$.
By applying Chebyshev's inequality we have that $D^{N}(s)$ is bounded in probability.   Then, by Lemma 5.6 in \citet{ko2018strong} we have that
$$\sup_{t\leq T}\left|\int_{0}^{t}\left\{\sqrt{N}[f(Y^N(s))-f(y(s))]-f'(y(s))D^{N}(s)\right\}ds\right|\xrightarrow{p} 0.$$
\end{proof}
\end{theorem}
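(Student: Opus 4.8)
The plan is to run the standard program for functional central limit theorems of density-dependent Markov jump processes (Kurtz; see \citet{Ethier2009}, and the adaptation in \citet{ko2018strong}): write $Y^N$ through its Dynkin martingale, subtract the fluid ODE, linearize the drift around the fluid path $y(\cdot)$, and show that the martingale part converges to a Gaussian martingale. Let $F^N$ be the density-scaled drift obtained by applying the generator $Q^N$ to the coordinate maps, so that $Y^N(t)=Y^N(0)+\int_0^t F^N(Y^N(s))\,ds+M^N(t)$ with $M^N$ a martingale whose jumps have size $1/N$. Because $\sum_{k=0}^{K-1}Y^N_k(t)\equiv 1$, the denominator $\sum_i Y^N_i g_i$ of the choice model is bounded below by $\min_i g_i>0$; hence $F^N$ is bounded, $C^1$, and Lipschitz on the simplex uniformly in $N$, and $F^N-f$ is of order $N^*/N-\gamma$. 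With $D^N(t)=\sqrt N(Y^N(t)-y(t))$ and $\dot y=f(y)$ this gives
\[
D^N(t)=D^N(0)+\int_0^t\!\sqrt N\big[F^N(Y^N(s))-f(Y^N(s))\big]ds+\int_0^t\!\sqrt N\big[f(Y^N(s))-f(y(s))\big]ds+\sqrt N\,M^N(t).
\]
The first integrand is $O\!\big(\sqrt N(N^*/N-\gamma)\big)$, bounded by hypothesis (and vanishing, cleanly, under the slightly stronger assumption $\sqrt N(N^*/N-\gamma)\to0$; otherwise it contributes an explicit deterministic term). By the mean value theorem and $C^1$-smoothness of $f$ (formula \eqref{b'}), the second integrand equals $f'(Z^N(s))D^N(s)$ for some $Z^N(s)$ on the segment between $Y^N(s)$ and $y(s)$, with $Z^N\to y$ uniformly on $[0,t_0]$ in probability by Theorem \ref{fluid_limit}.

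Next I would establish the two qualitative inputs. For the $L^2$ bound (Lemma \ref{L2bound}): square the decomposition, apply Doob's $L^2$ inequality to $\sqrt N M^N$ together with the bracket formula of Lemma \ref{martingale_brackets} — which gives $\sum_k\mathbb E[\langle\sqrt N M^N_k\rangle(T)]=O(1)$ since those integrands are bounded — bound the two drift integrals by $C\int_0^t\mathbb E\big[\sup_{u\le s}|D^N(u)|^2\big]ds$ using Lipschitz continuity of $f$ and $F^N$, and close with Gronwall. Tightness of $(D^N)$ with continuous limit points (Lemma \ref{tightness}) then follows from the usual criterion: the jumps are $1/\sqrt N\to0$, the drift integral is Lipschitz in $t$ with a bounded-in-probability constant, and the martingale term is tight via the uniform control on its brackets. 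For the brackets themselves, using Lemma \ref{martingale_brackets}, $Y^N\to y$, $N^*/N\to\gamma$, and again $\sum_i Y^N_i g_i\ge\min_i g_i$, one shows $\sup_{t\le T}\big|\langle\sqrt N M^N_k,\sqrt N M^N_j\rangle(t)-\int_0^t\mathcal B_{kj}(s)\,ds\big|\xrightarrow{p}0$; so by the martingale FCLT (Lindeberg being automatic from the $1/\sqrt N$ jumps) $\sqrt N M^N\Rightarrow M$, the continuous centered Gaussian martingale with the stated brackets.

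Assembling these, any subsequential limit $D$ of $D^N$ solves the linear SDE $D(t)=D(0)+\int_0^t f'(y(s))D(s)\,ds+M(t)$, which has a unique solution by Gronwall, so the whole sequence converges; formally this is Theorem 4.1 of Chapter 7 of \citet{Ethier2009}, whose hypotheses \eqref{condition_1}–\eqref{condition_5} are exactly the vanishing-jumps facts (immediate from the $1/\sqrt N$ and $1/N$ jump sizes and Lipschitzness), the bracket convergence just described, and the linearization estimate \eqref{condition_5}, which follows from the decomposition together with boundedness-in-probability of $D^N$ (Chebyshev plus the $L^2$ bound) and Lemma 5.6 of \citet{ko2018strong}. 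Finally $\Sigma(t)=\mathrm{Cov}[D(t),D(t)]$ satisfies the Lyapunov ODE $\dot\Sigma=\Sigma\mathcal A^\top+\mathcal A\Sigma+\mathcal B$ with $\mathcal A=f'(y(\cdot))$, obtained by applying It\^o to $D(t)D(t)^\top$ for the OU process (the martingale term is mean zero, its bracket contributes $\mathcal B$, the linear drift gives the first two terms); the componentwise identities are then just this ODE written out using \eqref{b'} and the bracket formula.

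The main obstacle is the pair of estimates feeding tightness: the uniform-in-$N$ $L^2$ bound on $\sup_{t\le T}|D^N(t)|$ must be bootstrapped through Gronwall using $\mathbb E[\sup_{u\le s}|D^N(u)|^2]$ rather than $\mathbb E[|D^N(s)|^2]$, with the martingale contribution tamed by Doob's inequality and the explicit brackets. Everything else rests on a single structural fact — that $\sum_k Y^N_k\equiv1$ and $\min_i g_i>0$ keep the choice-model denominator bounded away from $0$, making the drift smooth and the brackets convergent — which is precisely why the model uses the choice function $g$ rather than a deterministic ``recharge the lowest bucket'' rule. Given the fluid limit (Theorem \ref{fluid_limit}) and these bounds, the identification and covariance steps are routine.
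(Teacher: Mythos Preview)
Your proposal is correct and follows essentially the same route as the paper: the same Dynkin decomposition, the same $L^2$/Gronwall bound (Lemma~\ref{L2bound}) and tightness argument (Lemma~\ref{tightness}), the same invocation of Theorem~4.1 in Chapter~7 of \citet{Ethier2009} via conditions \eqref{condition_1}--\eqref{condition_5}, the mean-value linearization combined with Lemma~5.6 of \citet{ko2018strong}, and the Lyapunov ODE for $\Sigma$. Your parenthetical remark about the first integrand is in fact more careful than the paper, which in Proposition~\ref{driftbound} tacitly uses $\sqrt N\,|N^*/N-\gamma|\to 0$ even though the theorem's stated hypothesis is only $\limsup<\infty$.
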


Like in the mean field case, we now state the functional central limit theorem for the empirical process described in Section \ref{model_2}, where we consider battery usage time to be exponentially distributed.

\begin{theorem}[Functional Central Limit Theorem]
\label{difftheorem_model2}
Define $$D^{N}(t)=\sqrt{N}((X^N(t),Y^N(t))-(x(t),y(t))).$$
Consider $D^{N}(t)$ in $\mathbb{D}(\mathbb{R}_{+},\mathbb{R}^{K+1})$ with the Skorokhod $J_{1}$ topology, and suppose that 
$$\limsup_{N\rightarrow \infty}\sqrt{N}\left(\frac{N^*}{N}-\gamma\right)< \infty.$$ 
Then if $D^{N}(0)$ converges in distribution to $D(0)$, then $D^{N}(t)$ converges to the unique OU process solving $D(t)=D(0)+\int_{0}^{t}f'(x(s), y(s))D(s) ds+M(t)$ in distribution, where $f'(x,y)$ is specified as follows,
\begin{eqnarray}
\frac{\partial f_x(x,y)}{\partial x}&=&-(\mu+\mu_U)\sum_{k=K_U}^{K-1}y_k,\nonumber\\
\frac{\partial f_x(x,y)}{\partial y_k}&=&(\mu(1-x)-\mu_U x)\mathbf{1}\{k\geq K_U\}\nonumber\\
\frac{\partial f_y(x,y)(j)}{\partial x}&=&\sum_{k=\max(K_U,j)}^{K-1}\mu_Up_{kj}y_k-\lambda\gamma\left(\mathbf{1}\{j=K-1\}-\frac{g_jy_j}{\sum_{i=0}^{K-1}g_iy_i}\right)-\mu_U y_j \mathbf{1}\{j\geq K_U\},\nonumber\\
\frac{\partial f_y(x,y)(j)}{\partial y_k}&=&\mu_U x p_{k,j}\mathbf{1}\{k\geq \max\{j,K_U\}\}-\mu_U x \mathbf{1}\{k=j\geq K_U\},\nonumber\\
& & +\frac{\lambda \gamma (1-x) g_kg_jy_j}{(\sum_{i=0}^{K-1}y_ig_i)^2}-\frac{\lambda \gamma (1-x)g_j}{\sum_{i=0}^{K-1}y_ig_i}\mathbf{1}\{j= k\},\nonumber\\
\end{eqnarray}
and $M(t)=(M_{x}(t), M_{y,0}(t),\cdots,M_{y,K-1}(t))\in \mathbb{R}^{K+1}$ is a real continuous centered Gaussian martingale, with Doob-Meyer brackets given by
\begin{eqnarray}
\boldlangle M_{x}(t) \boldrangle & = & \int_{0}^{t}\left[(\mu(1-x(s))+\mu_U x(s))\sum_{k=K_U}^{K-1}y_k(s)\right]ds,\\
\boldlangle M_{x}(t), M_{y,j}(t)\boldrangle & = & \
-\int_{0}^{t}\left[\mu_U x(s)\sum_{k=\max\{j,K_U\}}^{K-1}p_{k,j}y_k(s)\right]ds,
\end{eqnarray}
\begin{eqnarray}
& &\boldlangle M_{y,k}(t),M_{y,j}(t)\boldrangle \nonumber\\
&=&\begin{cases}
\int_{0}^{t}\left[\sum_{i=\max(K_U,k+1)}^{K-1}\mu_U x p_{i,k}y_i(s) +\left(\mu_Ux(1-p_{k,k})\mathbf{1}_{\{k\geq K_U\}}\right.\right.\nonumber\\
+\left.\left.\frac{\lambda \gamma(1-x) g_k}{\sum_{i=0}^{K-1}y_i(s)g_i} \right)y_k(s)\right]ds, & k=j<K-1 \nonumber\\
\int_{0}^{t}\left[\mu_Ux(1-p_{K-1,K-1})y_{K-1}(s)+\lambda \gamma(1-x)\left(1- \frac{g_{K-1}y_{K-1}(s)}{\sum_{i=0}^{K-1}y_i(s)g_i}\right)\right]ds, & k=j=K-1\\
-\int_{0}^{t} \left[\mu_U x p_{k,j}y_k(s)+\frac{\lambda \gamma(1-x) g_j}{\sum_{i=0}^{K-1}y_i(s)g_i}y_j(s)\mathbf{1}\{k=K-1\}\right]ds,  & j<k, k\geq K_U\nonumber\\
0. & \text{otherwise}
\end{cases}\nonumber\\
\end{eqnarray}
Define $\mathcal{A}(t)=f'(x(t),y(t))$, $\mathcal{B}(t)=\left(\frac{d}{dt}\boldlangle M_i(t),M_j(t)\boldrangle  \right)_{ij}$, then the covariance matrix $\Sigma (t)=\mathrm{Cov}[D(t),D(t)]$ satisfies
\begin{equation}
\frac{d\Sigma(t)}{dt}=\Sigma(t)\mathcal{A}(t)^\top+\mathcal{A}(t)\Sigma(t)+\mathcal{B}(t).
\end{equation}
\begin{proof}
The proof ideas for Theorem \ref{difftheorem_model2} follow easily from the proof of Theorem \ref{difftheorem} so we do not prove them in this paper. 
\end{proof}
\end{theorem}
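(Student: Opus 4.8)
The plan is to mirror, step for step, the proof of Theorem \ref{difftheorem}, carrying along the extra coordinate $X^N$ and the extra coupling it induces. First I would write the Markov jump process $(X^N,Y^N)$ in semimartingale form
\[
(X^N(t),Y^N(t)) = (X^N(0),Y^N(0)) + \int_0^t F^N(X^N(s),Y^N(s))\,ds + M^N(t),
\]
where $F^N$ is the infinitesimal drift obtained by summing the jump vectors against the three transition rates in Section \ref{model_2}, and $M^N$ is the associated $\mathbb{R}^{K+1}$-valued martingale. Since $\sum_{k=0}^{K-1} Y_k^N(t)\equiv 1$, the choice-model denominator satisfies $\sum_{i} Y_i^N g_i \ge \min_i g_i > 0$, so $F^N \to f=(f_x,f_y)$ uniformly on $[0,1]\times[0,1]^K$ and $f$ is Lipschitz there (this is exactly the role of the Remark in Section \ref{model_1}). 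By Theorem \ref{fluid_limit_model2}, $(X^N,Y^N) \to (x,y)$, so $D^N(t) = \sqrt{N}((X^N(t),Y^N(t)) - (x(t),y(t)))$ is the object to analyze.

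Next I would establish the analog of Lemma \ref{martingale_brackets}. For a jump process, $\boldlangle \sqrt N M^N_a, \sqrt N M^N_b\boldrangle(t) = N\int_0^t \sum_\theta (\Delta_\theta)_a(\Delta_\theta)_b\, r^N_\theta(s)\,ds$, where $\theta$ ranges over the three transition types, $\Delta_\theta$ is the corresponding jump (of size $O(1/N)$) and $r^N_\theta$ its rate. The swap transition moves only the $y$-block, the customer-arrival transition moves only $x$, and the battery-usage transition moves $x$ and $y$ simultaneously; it is this last transition that produces the nonzero cross bracket $\boldlangle M_x, M_{y,j}\boldrangle$. Computing these sums and then letting $N\to\infty$ — replacing $Y^N,X^N$ by $y,x$ (Theorem \ref{fluid_limit_model2}) and $N^*/N$ by $\gamma$, and using continuity of the integrands — gives exactly the stated brackets $\boldlangle M_x\boldrangle$, $\boldlangle M_x,M_{y,j}\boldrangle$, and $\boldlangle M_{y,k},M_{y,j}\boldrangle$. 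I would then prove the $L^2$ bound (analog of Lemma \ref{L2bound}) by Gronwall's inequality applied to $\mathbb{E}[\sup_{s\le t}|D^N(s)|^2]$ using the Lipschitz constant of $f$ together with the bracket bound just obtained, and tightness with continuous limit points (analog of Lemma \ref{tightness}) from the jump sizes being $O(1/\sqrt N)$ and the Kurtz--Aldous criterion.

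Finally I would invoke Theorem 4.1 of Chapter 7 of \citet{Ethier2009} exactly as in Model 1, checking the five conditions (\ref{condition_1})--(\ref{condition_5}): (a) and (c) are immediate from the jump sizes $1/\sqrt N$ and $1/N$ respectively; (b) from Lipschitzness of $F^N$ plus (a); (d) from the drift/fluid-limit estimates applied to the brackets; and (e) from the mean value theorem applied to $f$, uniform continuity of $f'(x,y)$ (whose entries are listed in the statement), the fluid limit, and boundedness in probability of $D^N$ via Lemma 5.6 of \citet{ko2018strong}. This yields convergence of $D^N$ to the unique OU process $D(t)=D(0)+\int_0^t f'(x(s),y(s))D(s)\,ds + M(t)$; applying It\^o's formula to $D(t)D(t)^\top$ (equivalently, the standard variation-of-constants computation for a linear SDE) produces the Lyapunov ODE $\dot{\Sigma} = \Sigma\mathcal{A}^\top + \mathcal{A}\Sigma + \mathcal{B}$ for $\Sigma(t)=\mathrm{Cov}[D(t),D(t)]$.

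The main obstacle — a modest one, since this is essentially a re-run of Theorem \ref{difftheorem} — is the bookkeeping forced by the coupling of $X^N$ and $Y^N$: the Jacobian $f'$ now has genuine off-diagonal blocks ($\partial f_y/\partial x$ and $\partial f_x/\partial y_k$), and the single battery-usage transition contributes to three different brackets simultaneously. One must also confirm that $f$ and $f'$ are Lipschitz and uniformly continuous on the entire relevant state space; this works because $Y^N$ lives on the simplex, keeping $\sum_i y_i g_i$ bounded away from $0$, and because the usage rates carry the bounded factor $x\in[0,1]$. Beyond that, every estimate in the Model 1 proof transfers with $\mu \mapsto \mu_U x$ in the usage terms, $\mu \mapsto \mu(1-x)$ in the term describing customers entering service, and $\lambda\gamma \mapsto \lambda\gamma(1-x)$ in the swap terms.
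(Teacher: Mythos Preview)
Your proposal is correct and follows precisely the approach the paper itself prescribes: the paper's proof of Theorem \ref{difftheorem_model2} consists solely of the sentence that it ``follow[s] easily from the proof of Theorem \ref{difftheorem},'' and your plan is a careful, step-by-step transcription of that proof to the $(K+1)$-dimensional process, with the correct identification of where the extra coordinate enters (the off-diagonal Jacobian blocks and the cross bracket generated by the battery-usage transition). There is nothing to add.
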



\section{Insights to Staffing Swappers}\label{Staffing}
In Sections \ref{Mean_Field_Limit} and \ref{Central_Limit}, we prove mean field and central limit theorems for the empirical process of battery life.  In this section, we show how to use these limits for providing insights for staffing the number of swappers to keep the number of e-scooters with low battery life below a pre-specified threshold.  Our limit theorems are useful because when the scale of the system $N$ is large enough, the empirical process representing battery life can be approximated by a normal distribution, which is extremely convenient from a computational perspective.  As a result, we can approximate the tail probability of the empirical measure process by the following expression
\begin{eqnarray}
P(Y_k^N(t)>x)\approx P(y_k(t)+\sqrt{\Sigma_{kk}(t)/N}\cdot Z>x)=1-\Phi\left(\frac{x-y_k(t)}{\sqrt{\Sigma_{kk}(t)/N}}\right).  
\end{eqnarray}
where $Z\sim N(0,1)$ and $\Phi$ is the cdf of standard normal distribution.

Similarly, if we want to consider several terms of the empirical process $(Y_{\sigma(i)})_{i}$ where $\sigma$ is a permutation on $\{0,1,\cdots,K-1\}$. We have
\begin{eqnarray}
P\left(\sum_{i=1}^{m}Y_{\sigma(i)}^N(t)>x\right)&\approx & P\left(\sum_{i=1}^{m}y_{\sigma(i)}(t)+\sqrt{\frac{1}{N}\left(\sum_{i=1}^{m}\Sigma_{\sigma(i)\sigma(i)}(t)+2\sum_{i<j}^{m}\Sigma_{\sigma(i)\sigma(j)}(t)\right)}\cdot Z>x\right)\nonumber\\
&=&1-\Phi\left(\frac{x-\sum_{i=1}^{m}y_{\sigma(i)}(t)}{\sqrt{\frac{1}{N}\left(\sum_{i=1}^{m}\Sigma_{\sigma(i)\sigma(i)}(t)+2\sum_{i<j}^{m}\Sigma_{\sigma(i)\sigma(j)}(t))\right)}}\right)
\end{eqnarray}


An important question the above analysis can help answer is how many swappers are needed to keep the proportion of e-scooters with low battery life lower than a threshold, i.e. $P(Y_0>x)<\epsilon$ for given $(x,\epsilon)$. Specifically, we construct the following algorithm for finding the solution numerically.

\begin{algorithm}\label{algo}
Given $x>0, \epsilon>0$, we have the following steps of finding the number of swappers needed to satisfy $P(Y_0^N>x)\leq \epsilon$.
\begin{enumerate}
\item Initialize $\gamma=1$.
\item Evaluate
\begin{equation}
f(\gamma)=\bar{y}_0(\gamma)+\sqrt{\frac{\bar{\Sigma}_{00}(\gamma)}{N}}\Phi^{-1}(1-\epsilon)-x
\end{equation}
where $\bar{y}_0(\gamma),\bar{\Sigma}_{00}(\gamma)$ are the limiting mean and variance of $Y^N_0(t)$ at equilibrium (which are computed from the mean field and diffusion limits).

If $f(\gamma)>0$, set $\gamma \leftarrow 2\gamma$ (double the value of $\gamma$) and repeat step 2 until $f(\gamma)<0$. Denote the final value of $\gamma$ as $\gamma_{\max}$.
\item Apply bisection method on interval $[0,\gamma_{\max}]$ to find the root $\gamma^*$ to $f(\gamma)$. Then $\gamma^*$ is the optimal number of swapper per e-scooter needed.
\end{enumerate}
\end{algorithm}

The main idea of the algorithm is to use the mean field and central limit theorems to construct quantiles for each interval of battery life. More specifically, we invert the quantiles to find the number of swappers to achieve the probabilistic performance given by system operator.  We will demonstrate the usefulness of this algorithm in the next section, which is devoted to numerical examples.  


\section{Numerical Examples and Simulation} \label{Numerics}

In this section, we use numerical examples and simulation results to provide better insights to the behavior of the e-scooters system. The examples validate our theoretical results in Sections \ref{Mean_Field_Limit} and \ref{Central_Limit} by showing how accurate the mean field and diffusion limits are for approximating the mean and variance of the empirical process. We also illustrate how to use our results for staffing the number of swappers.  The following simulation results are computed with the following parameter settings:

\begin{itemize}
\item The number of e-scooters $N=100$,
\item The number of swappers $N^*=50$,
\item Arrival rate of customers $\lambda=1$,
\item Arrival rate of swappers $\mu=1$,
\item Battery life bucket size $K=5$,
\item Battery threshold for riding $K_U=1$,
\item Battery usage probability $p_{ij}=\frac{1}{i+1}\mathbf{1}\{j\leq i\}$.
\end{itemize}

We initialize the battery life of scooters at time 0 to be uniform in each interval of battery life, i.e. $Y^N(0)=[0.2,0.2,0.2,0.2,0.2]$.
\begin{figure}[H]
\captionsetup{justification=centering}
\begin{center}
		\includegraphics[scale = 0.65]{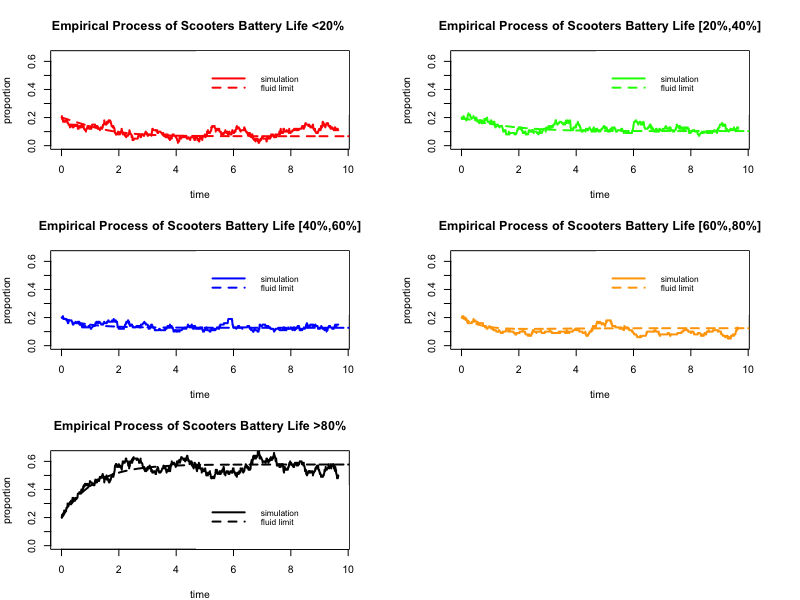}
		\end{center}
		\vspace{-.02in}
\caption{Single simulated sample path $Y^N(t)$ vs. its mean field limit $y(t)$ ($\gamma=0.5$)}  \label{Sim_1}
\end{figure}

In Figure \ref{Sim_1}, we simulate the e-scooter network according to the model dynamics where $K=5$ (intervals of battery life).  It is important to note that we have only simulated one sample path in this picture and this is not an average of sample paths.  Thus, we find that the mean field limit captures the sample path behavior of the empirical process dynamics for all of the proportions.    

\begin{figure}[H]
\captionsetup{justification=centering}
\begin{center}
		\includegraphics[scale = 0.3]{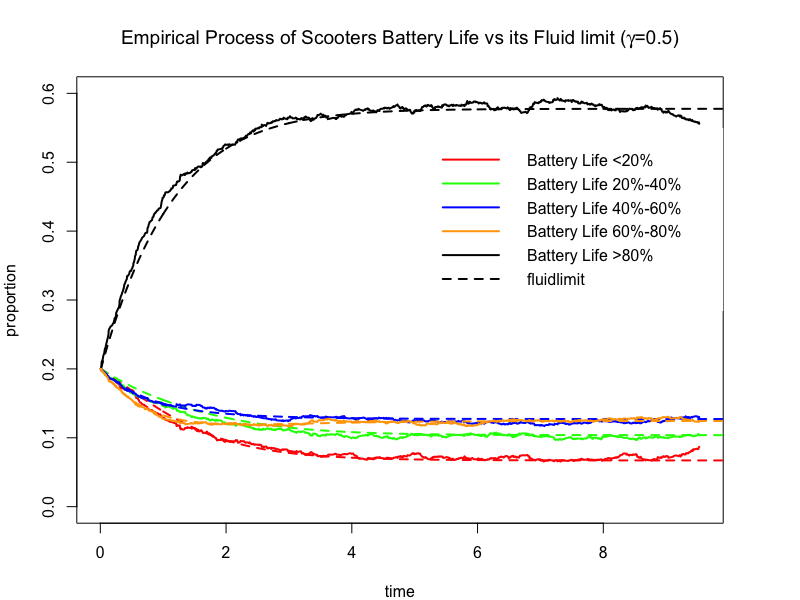}~\includegraphics[scale = 0.3]{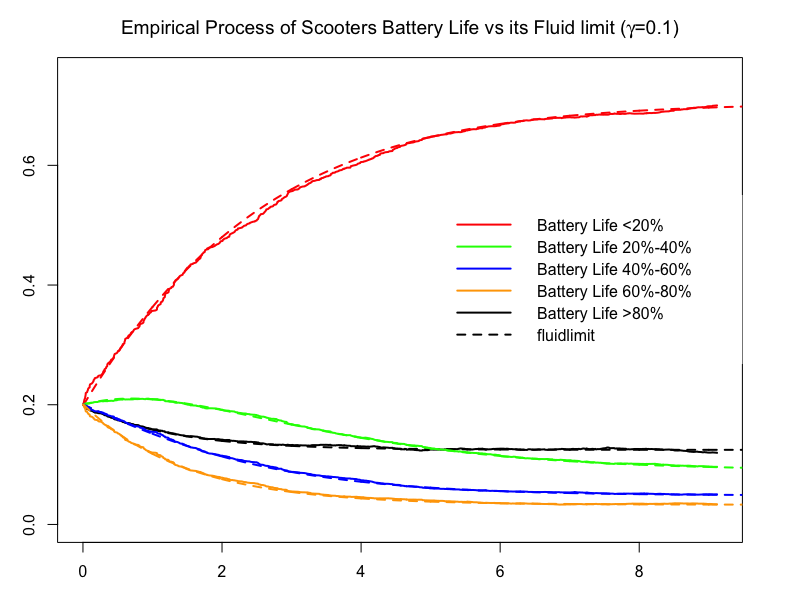}
		\end{center}
		\vspace{-.02in}
\caption{Average of 100 simulated sample paths $\mathbb{E}[Y^N(t)]$ vs. its mean field limit $y(t)$ ($\gamma=0.5$ (Left)) and ($\gamma=0.1$ (Right)) } \label{Sim_2}
\end{figure}

In Figure \ref{Sim_2}, we average the dynamics over 100 sample paths.  When compared to Figure \ref{Sim_1}, we observe that the averaged dynamics are closer to the mean field limit trajectories, which is to be expected.  We also find that the mean field limit equations capture the averaged sample path behavior of the empirical process dynamics for all of the proportions.  

\begin{figure}[H]
\captionsetup{justification=centering}
\begin{center}
		\includegraphics[scale = 0.3]{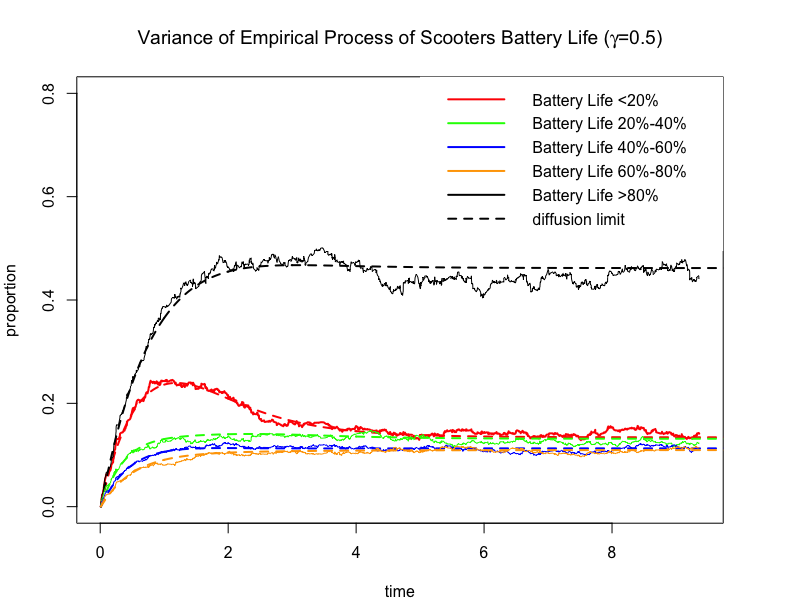}~\includegraphics[scale = 0.3]{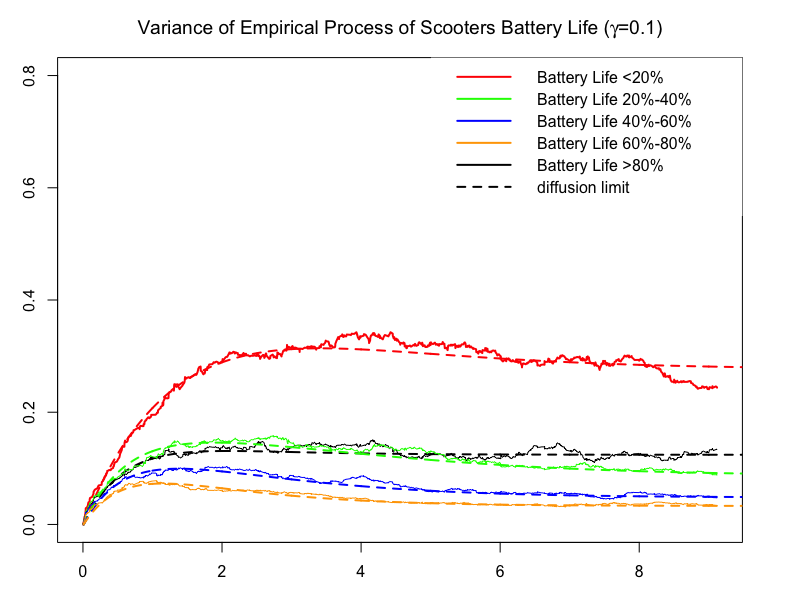}
		\end{center}
		\vspace{-.02in}
\caption{Variance of 100 simulated sample paths $\text{Var}[Y^N(t)]$ vs. its diffusion limit $\Sigma(t)$  ($\gamma=0.5$ (Left)) and ($\gamma=0.1$ (Right)) } \label{Sim_3}
\end{figure}

In Figure \ref{Sim_3}, we average the variance dynamics for each proportion over 100 sample paths.  We observe that the variance dynamics are a bit more stochastic than the mean field limit simulations.   However, we find that the variance dynamics are well approximated by the variance of the central limit theorem for the e-scooter process and for all of the proportions.  

%


%
%
%

To give an example on the implementation of Algorithm \ref{algo} in Section \ref{Staffing}, we set $(x,\epsilon)=(10\%,10\%)$, i.e. find the minimum value of $\gamma$ such that the probability that proportion of scooters with low battery life (<20\%) is more than 10\% is no greater than 10\%. The value of $\gamma$ found through the algorithm is 0.527.

In Figure \ref{hist_y0}, we validate our algorithm by running 500 simulations using the optimal number of swappers found through Algorithm \ref{algo} ($N=100$, $N^*=53$), and plot the distribution of $Y_0^N$ at equilibrium from the 500 simulations. Then we plot the 50\%, 80\%, 90\% and 95\% sample quantiles of $Y_0^N$ at equilibrium (pink solid lines) and compare them with the quantiles estimated from the algorithm (red solid lines), and we can see that the approximation using the algorithm is very close to reality. We also plot the normal distribution curve to see how good the approximation is to the distribution of $Y^N_0$.  We find that the central limit approximations capture the quantile behavior of the e-scooter system quite well. 

\begin{figure}[H]
\captionsetup{justification=centering}
\begin{center}
		\includegraphics[scale = 0.6]{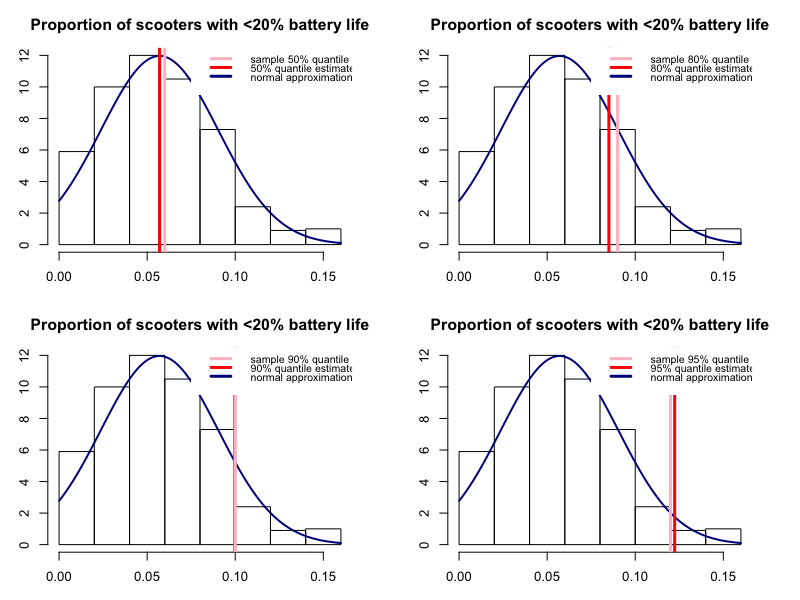}
		\end{center}
		\vspace{-.02in}
\caption{Histogram of simulated $Y_0^N$ with normal approximation} \label{hist_y0}
\end{figure}

\begin{table}[h]
\begin{center}
\caption{Optimal $\gamma$ for different values of $(x,\epsilon)$ ($\lambda=\mu=1$)} \label{table:gammas}
\begin{tabular}{c|c|c|c|c|c|c|c} 
\hline\hline 
$(x,\epsilon)$& 0.01 & 0.05 & 0.1& 0.15& 0.2& 0.25 & 0.3\\
\hline
0.05           & 0.828 & 0.734 & 0.695& 0.672& 0.641& 0.625 & 0.609\\
0.1           & 0.598 & 0.551 & 0.527& 0.508& 0.496& 0.484 & 0.473\\
0.15          & 0.504 & 0.467 & 0.447& 0.434& 0.424& 0.414 & 0.406\\
0.2           & 0.443 & 0.412 & 0.395& 0.383& 0.375& 0.366 & 0.359\\
0.25          & 0.398 & 0.370 & 0.354& 0.344& 0.336& 0.328 & 0.322\\
0.3          & 0.361 & 0.334 & 0.320& 0.311& 0.303& 0.296 & 0.290\\
\hline 
\end{tabular}
\end{center}
\end{table}

Table \ref{table:gammas} summarizes the optimal value of $\gamma$ (number of swappers per e-scooter, for different values of $(x,\epsilon)$. In addition, Figure \ref{gamma_surface} provides a surface plot of $\gamma$ over different values of $(x,\epsilon)$ that are given in Table \ref{table:gammas}  .

\begin{figure}[H]
\captionsetup{justification=centering}
\begin{center}
		\includegraphics[scale = 0.4]{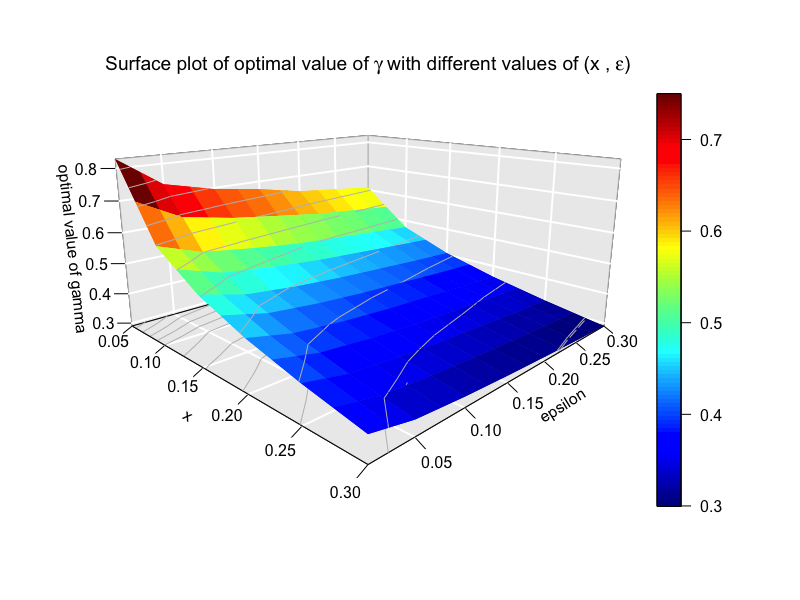}
		\end{center}
		\vspace{-.02in}
\caption{Surface plot of optimal $\gamma$ for different values of $(x,\epsilon).$} \label{gamma_surface}
\end{figure}


\section{Conclusion}\label{Conclusion}

In this paper, we construct two stochastic models for modeling the battery life dynamics for e-scooters in a large network.   In full generality, the model is intractable when wants to keep track of each scooter's dynamics individually because of the high dimension.  However, we propose to use empirical processes to capture the essential dynamics of battery life in e-scooter systems.   Empirical processes describe the proportion of e-scooters that have battery life in a particular interval. To this end, we prove a mean field limit and a functional central limit theorem for our e-scooter network.  We show that the mean and the variance of the empirical process can be approximated by a system of  $\frac{K^2 + 3K}{2}$ differential equations where $K$ is the number battery life intervals we want to keep. We use the mean and variance to also construct a numerical algorithm to compute the number of \textbf{swappers} needed to ensure that the fraction of scooters whose battery life is below a pre-determined threshold.  

There are many directions for future work.  As Figure \ref{Scooter_Duration_cdf} shows, the trip durations are not exponential and are closer to a lognormal distribution or gamma distribution.  An extension to general arrival and service distributions would aid in showing how the non-exponential distributions affect the dynamics of the empirical process.  Recent work by \citet{li2017nonlinear, ko2017diffusion, li2016mean, pender2017approximations} provides a Poisson process representation of Markovian arrival processes.  Thus, it might be useful to leverage this representation in future work where the arrivals and service distributions are non-renewal processes.   

Although not explicitly studied in this work, it would be interesting to explore the impact of non-stationary arrival rates and service rates.  This would undoubtedly change the underlying dynamics, however, we should mention that our analysis is easily generalizable to this setting.  Moreover, our numerical algorithm for determining the number of \textbf{swappers} does not depend on the stationarity of our model and would easily generalize to the non-stationary setting.  One important feature that is important to know in the non-stationary rate context is the size of the amplitude and the frequency of the mean field limit when the arrival rate is periodic. One way to analyze the amplitude and the frequency is to use Lindstedt's method like in \citet{novitzky2019nonlinear}.

Lastly, it is also interesting to consider a spatial model of arrivals to the e-scooter network.  In this case, we would consider customers arriving to the system via a spatial Poisson process and riders would choose among the nearest scooters with enough battery life to make their trip.  This spatial process can model the real choices that riders make and would model the real spatial dynamics of e-scooter networks.  We intend to pursue these extensions in future work.   

\section{Appendix}\label{Appendix}

\subsection*{Proof of Mean Field Limit Results}

\begin{proof}[Proof of Theorem \ref{fluid_limit}]
Our proof exploits Doob's inequality for martingales and Gronwall's lemma, and we use Proposition~\ref{bound}, Proposition~\ref{Lipschitz}, and Proposition~\ref{drift} in the proof, which are stated after the proof of Theorem~\ref{fluid_limit}.

Since $Y^{N}(t)$ is a semi-martingale, we have the following decomposition of $Y^N(t)$ ,
\begin{equation}\label{semiY}
Y^{N}(t)=\underbrace{Y^{N}(0)}_{\text{initial condition}}+\underbrace{M^{N}(t)}_{\text{martingale}}+\int_{0}^{t}\underbrace{F^N(Y^{N}(s))}_{\text{drift term}}ds
\end{equation}
where
$Y^{N}(0)$ is the initial condition and $M^{N}(t)$ is a family of martingales.  Moreover, $\int_{0}^{t}F^N(Y^{N}(s))ds$ is the integral of the drift term where the drift term is given by $F^N: [0,1]^{K}\rightarrow \mathbb{R}^{K}$ or
\begin{eqnarray}\label{F^N}
F^N(y)&=&\sum_{x\neq y}(x-y)Q^N(y,x)\\
&=&\sum_{k=0}^{K-1}\left[ \left( \frac{\lambda N^*}{N}   \frac{g_k}{\sum_{i=0}^{K-1}y_ig_i} \right)(\mathbf{1}_{K-1}-\mathbf{1}_{k})+\sum_{j=0}^{k}\mu p_{kj} (\mathbf{1}_{j}-\mathbf{1}_{k})\mathbf{1}_{k\geq K_U}\right] y_k\nonumber 
\end{eqnarray}

We want to compare the empirical measure $Y^N(t)$ with the mean field limit $y(t)$ defined by
\begin{equation}
y(t)=y(0)+\int_{0}^{t}f(y(s))ds.
\end{equation}

Let $|\cdot|$ denote the Euclidean norm in $\mathbb{R}^{K}$, then
\begin{eqnarray}
\left|Y^N(t)-y(t)\right|&=&\left|Y^{N}(0)+M^{N}(t)+\int_{0}^{t}F^N
(Y^{N}(s))ds-y(0)-\int_{0}^{t}f(y(s))ds\right|\nonumber \\
& =& \left|Y^{N}(0)-y(0)+M^{N}(s)+\int_{0}^{t}\left(F^N
(Y^{N}(s))-f(Y^{N}(s))\right)ds \right.\nonumber\\
& & \left.+\int_{0}^{t}(f(Y^{N}(s))-f(y(s)))ds\right|.\nonumber \\
\end{eqnarray}
Now define the random function $f^{N}(t)=\sup_{s\leq t}\left|Y^{N}(s)-y(s)\right|$, we have
\begin{eqnarray}
f^{N}(t) &\leq& |Y^{N}(0)-y(0)|+\sup_{s\leq t}|M^{N}(s)|+\int_0^t|F^N(Y^{N}(s))-f(Y^{N}(s))|ds \nonumber \\
&&+\int_{0}^{t}|f(Y^{N}(s))-f(y(s))|ds.
\end{eqnarray}
By Proposition ~\ref{Lipschitz}, $f(y)$ is Lipschitz with respect to Euclidean norm. Let $L$ be the Lipschitz constant of $f(y)$, then. 
\begin{eqnarray}
f^{N}(t)&\leq& |Y^{N}(0)-y(0)|+\sup_{s\leq t}|M^{N}(s)|+\int_0^t|F^N(Y^{N}(s))-f(Y^{N}(s))|ds \nonumber \\
&&+\int_{0}^{t}|f(Y^{N}(s))-f(y(s))|ds \nonumber \\
&\leq& |Y^{N}(0)-y(0)|+\sup_{s\leq t}|M^{N}(s)|+\int_0^t| F^N(Y^{N}(s))-f(Y^{N}(s))|ds \nonumber \\ 
&&+L\int_{0}^{t}|Y^{N}(s)-y(s)|ds \nonumber \\
&\leq& |Y^{N}(0)-y(0)|+\sup_{s\leq t}|M^{N}(s)|+\int_0^t| F^N(Y^{N}(s))-f(Y^{N}(s))|ds \nonumber \\
&&+L\int_{0}^{t}f^{N}(s)ds.
\end{eqnarray}

By Gronwall's lemma (See \citet{10.2307/1967124}), 
\begin{equation}
f^{N}(t) \leq \left(|Y^{N}(0)-y(0)|+\sup_{s\leq t}|M^{N}(s)|+\int_0^t| F^N(Y^{N}(s))-f(Y^{N}(s))|ds\right)e^{Lt}.
\end{equation}

Now  to bound $f^{N}(t)$ term by term, we define function $\alpha: [0,1]^{K}\rightarrow \mathbb{R}^{K}$ as
\begin{eqnarray}
\alpha_k(y)&=&\sum_{x\neq y}|x-y|^2Q^N(y,x)(k)\nonumber\\
&=&\begin{cases}
\frac{1}{N}\sum_{i=\max(K_U,k+1)}^{K-1}\mu p_{i,k}y_i +\frac{1}{N}\left(\mu(1-p_{k,k})\mathbf{1}_{\{k\geq K_U\}}+\frac{\lambda N^* g_k}{N\sum_{i=0}^{K-1}y_ig_i} \right)y_k, & k<K-1\nonumber\\
\frac{1}{N}\mu(1-p_{K-1,K-1})y_{K-1}+\lambda \frac{N^*}{N^2}\left(1- \frac{g_{K-1}y_{K-1}}{\sum_{i=0}^{K-1}y_ig_i}\right), & k=K-1
\end{cases}
\end{eqnarray}

and consider  the following four sets 
\begin{eqnarray}
\Omega_0 &=& \{|Y^{N}(0)-y(0)|\leq \delta \}, \\
\Omega_1 &=& \left\{\int_0^{t_0}|F^N(Y^{N}(s))-f(Y^{N}(s))|ds\leq \delta \right\}, \\
\Omega_2 &=& \left\{\int_0^{t_0}\alpha(Y^{N}(t))dt \leq A(N)t_0 \right\}, \\
\Omega_3 &=& \left\{\sup_{t\leq t_0}|M_t^{N}|\leq \delta \right\} ,
\end{eqnarray}
where $\delta=\epsilon e^{-Lt_0}/3$. Here the set $\Omega_{1}$ is to bound the initial condition, the set $\Omega_{2}$ is to bound the drift term $F^N$ and the limit of drift term $b$, and  the sets $\Omega_{2},\Omega_{3}$ are to bound the martingale $M^{N}(t)$.

Therefore on the event $\Omega_0\cap \Omega_1\cap \Omega_3$,
\begin{equation}\label{e}
f^{N}(t_0)\leq 3\delta e^{Lt_0}=\epsilon.
\end{equation}

Since $\lim_{N\rightarrow \infty}\frac{N^*}{N}=\gamma$, we can
choose large enough $N$ such that
$$\frac{N^*}{N}\leq 2\gamma.$$
Thus, we have 
\begin{eqnarray}
\alpha_k(y)&=&\sum_{x\neq y}|x-y|^2Q(y,x)(k)\nonumber\\
&\leq & \frac{1}{N}\sum_{i=\max(K_U,k+1)}^{K-1}\mu p_{i,k}y_i +\frac{1}{N}\left(\mu(1-p_{k,k})\mathbf{1}_{\{k\geq K_U\}}+2\frac{\lambda \gamma  g_k}{\sum_{i=0}^{K-1}y_ig_i} \right)y_k\nonumber \\
&\leq &\frac{1}{N}\left(\mu K +\mu +2\lambda \gamma \right)\nonumber \\
&\lesssim & O(\frac{1}{N}) .
\end{eqnarray}

Now we consider the stopping time $$T=t_0\wedge \inf \left\{t\geq 0:\int _{0}^{t}\alpha(Y^{N}(s))ds>A(N)t_0\right\},$$
 and by Proposition ~\ref{bound}, we have that
$$\mathbb{E}\left(\sup_{t\leq T}|M^{N}(t)|^2\right)\leq 4\mathbb{E}\int_{0}^{T}\alpha(Y^N(t))dt\leq 4A(N)t_0. $$
On $\Omega_2$, we have $T =t_0$, so 
$\Omega_2 \cap \Omega_3^{c}\subset \{\sup_{t\leq T}|M^{N}_t|>\delta\}$. By
Chebyshev's inequality we have that 
\begin{equation}
\mathbb{P}(\Omega_2 \cap \Omega_3^{c})\leq \mathbb{P}\left(\sup_{t\leq T}|M^{N}_t|>\delta\right)\leq \frac{\mathbb{E}\left(\sup_{t\leq T}|M^{N}(t)|^2\right)}{\delta^2}\leq 4A(N)t_0/\delta^2.
\end{equation}
Thus, by Equation (\ref{e}), we have the following result,
\begin{equation}
\begin{split}
\mathbb{P}\left(\sup_{t\leq t_0}|Y^{N}(t)-y(t)|>\epsilon\right)&\leq \mathbb{P}(\Omega_0^c\cup \Omega_1^c\cup \Omega_3^c)\\
&\leq \mathbb{P}(\Omega_2 \cap \Omega_3^{c})+\mathbb{P}(\Omega_0^{c}\cup \Omega_1^{c}\cup \Omega_2^{c})\\
&\leq 4A(N)t_0/\delta^2+\mathbb{P}(\Omega_0^{c}\cup \Omega_1^{c}\cup \Omega_2^{c})\\
&=36A(N)t_0 e^{2Lt_0}/\epsilon^2+\mathbb{P}(\Omega_0^{c}\cup \Omega_1^{c}\cup \Omega_2^{c}).
\end{split}
\end{equation}
\\
Let $A(N)=\frac{4(C+\gamma)}{N}$, then $\Omega_{2}^{c}=\emptyset$.
And since $Y_0^N\xrightarrow{p} y(0)$,   $\lim_{N\rightarrow \infty}\mathbb{P}(\Omega_{2}^{c})=0$. Therefore we have $$\lim_{N\rightarrow \infty}\mathbb{P}\left(\sup_{t\leq t_0}|Y^{N}(t)-y(t)|>\epsilon\right)=\lim_{N\rightarrow\infty}\mathbb{P}(\Omega_{1}^{c}) .$$
By Proposition \ref{drift}, $\lim_{N\rightarrow\infty}\mathbb{P}(\Omega_{1}^{c})=0$.
Thus, we proved the final result
$$\lim_{N\rightarrow \infty}\mathbb{P}\left(\sup_{t\leq t_0}|Y^{N}(t)-y(t)|>\epsilon\right)=0.$$
\end{proof}

\begin{proposition}[Bounding martingales]\label{bound}
For any stopping time $T$ such that $\mathbb{E}(T)<\infty$, we have
\begin{equation}
\mathbb{E}\left(\sup_{t\leq T}|M^{N}(t)|^2\right)\leq 4\mathbb{E}\int_{0}^{T}\alpha(Y^N(t))dt.
\end{equation}  
\begin{proof}
See proof of Proposition 4.2 (page 46) in \citet{tao2017stochastic}.	
\end{proof}
\end{proposition}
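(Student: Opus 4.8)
The plan is to reduce the claim to a coordinatewise application of Doob's $L^2$ maximal inequality, using the explicit form of the predictable quadratic variation of $M^N$. Recall from the semimartingale decomposition \eqref{semiY} that $M^N(t)=Y^N(t)-Y^N(0)-\int_0^t F^N(Y^N(s))\,ds$, so each coordinate $M^N_k$ is the mean‑zero martingale obtained by compensating the pure‑jump coordinate process $Y^N_k$. Since $Y^N$ is a Markov jump process with generator $\mathcal{G}^N\phi(y)=\sum_{x\neq y}(\phi(x)-\phi(y))Q^N(y,x)$, Dynkin's formula applied to $\phi(y)=y_k$ produces exactly $M^N_k$, and applied to $\phi(y)=y_k^2$ together with the elementary identity $\mathcal{G}^N(y_k^2)-2y_k\,\mathcal{G}^N(y_k)=\sum_{x\neq y}(x_k-y_k)^2 Q^N(y,x)=\alpha_k(y)$ shows that the predictable quadratic variation of $M^N_k$ is $\langle M^N_k\rangle(t)=\int_0^t\alpha_k(Y^N(s))\,ds$, and that distinct coordinates have the cross‑brackets recorded later in the paper.

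First I would dispose of the required integrability. The total jump rate out of any state is $O(N)$, each jump of $Y^N$ has size $1/N$, and $\mathbb{E}(T)<\infty$, so $\mathbb{E}\,\langle M^N_k\rangle(T)=\mathbb{E}\int_0^T\alpha_k(Y^N(s))\,ds<\infty$; hence $M^N_k(\cdot\wedge T)$ is a genuine square‑integrable martingale and optional sampling gives $\mathbb{E}\big[M^N_k(T)^2\big]=\mathbb{E}\,\langle M^N_k\rangle(T)=\mathbb{E}\int_0^T\alpha_k(Y^N(s))\,ds$. (If one prefers not to verify integrability at the outset, the same end result follows by localizing with stopping times $T_m\uparrow\infty$, proving the bound on $[0,T\wedge T_m]$, and letting $m\to\infty$ via monotone convergence.)

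Next I would apply Doob's $L^2$ maximal inequality to each coordinate on the stopped interval,
\[
\mathbb{E}\Big(\sup_{t\leq T}|M^N_k(t)|^2\Big)\;\leq\;4\,\mathbb{E}\big[M^N_k(T)^2\big]\;=\;4\,\mathbb{E}\int_0^T\alpha_k(Y^N(s))\,ds ,
\]
and then sum over $k=0,\dots,K-1$. Using $|M^N(t)|^2=\sum_k|M^N_k(t)|^2$ and $\sup_{t\leq T}\sum_k|M^N_k(t)|^2\leq\sum_k\sup_{t\leq T}|M^N_k(t)|^2$ yields
\[
\mathbb{E}\Big(\sup_{t\leq T}|M^N(t)|^2\Big)\;\leq\;4\sum_{k=0}^{K-1}\mathbb{E}\int_0^T\alpha_k(Y^N(s))\,ds\;=\;4\,\mathbb{E}\int_0^T\alpha(Y^N(s))\,ds ,
\]
where $\alpha(y)$ is read as $\sum_k\alpha_k(y)$, consistent with its scalar use in the sets $\Omega_2$ above. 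This is the asserted inequality.

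The only genuinely delicate point is the first step: confirming that $M^N_k$ is a true (not merely local) martingale up to the stopping time $T$ and that the quadratic‑variation identity survives optional sampling. Given the uniformly bounded transition rates of $Y^N$ and $\mathbb{E}(T)<\infty$ this is routine, and everything after it is bookkeeping. Since the computation is identical to Proposition 4.2 (page 46) of \citet{tao2017stochastic} after substituting the transition rates $Q^N$ of the e‑scooter model, I would simply invoke that argument rather than reproduce it.
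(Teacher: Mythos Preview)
Your proposal is correct and essentially matches the paper's approach: the paper's proof consists solely of the citation ``See proof of Proposition 4.2 (page 46) in \citet{tao2017stochastic},'' and you have spelled out exactly the standard argument that reference contains---Dynkin's formula to identify $\langle M^N_k\rangle(t)=\int_0^t\alpha_k(Y^N(s))\,ds$, Doob's $L^2$ maximal inequality coordinatewise on the stopped martingale, and a sum over $k$ with $\alpha$ read as $\sum_k\alpha_k$---before invoking the same citation yourself. There is nothing to add.
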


\begin{proposition}[Asymptotic Drift is Lipschitz]\label{Lipschitz}
The drift function $f(y)$ given in Equation (\ref{eqn:b}) is a Lipschitz function with respect to the Euclidean norm in $\mathbb{R}^{K}$. 
\begin{proof}
Denote $\|\cdot\|$ the Euclidean norm in $\mathbb{R}^{K}$.  Consider $y,\tilde{y}\in [0,1]^{K}$,
\begin{eqnarray}
\|f(y)-f(\tilde{y})\|&\leq& 2\left(\frac{\lambda \gamma \max_i g_i}{\min_i g_i}+\mu\right)\|y-\tilde{y}\|
\end{eqnarray}
which proves that $f(y)$ is Lipschitz with respect to Euclidean norm in $\mathbb{R}^{K}$.
\end{proof}
\end{proposition}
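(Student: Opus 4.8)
The plan is to split $f$ into its affine part and the single rational term produced by recharging, and to exploit the standing assumption $g_{\min}:=\min_i g_i>0$ so that the denominator $\sum_i g_iy_i$ stays bounded away from zero.

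First I would note that both $Y^N(\cdot)$ and the solution $y(\cdot)$ live on the probability simplex $\Delta:=\{y\in[0,1]^{K}:\sum_{k=0}^{K-1}y_k=1\}$: summing the componentwise formula for $f_j(y)$ over $j$ and using $\sum_{j}p_{k,j}=1$ gives $\sum_{j}f_j(y)=0$, so $\Delta$ is invariant for the mean field ODE $\shortdot{y}=f(y)$, while $\sum_k Y^N_k(t)=1$ holds by construction of the empirical process. On $\Delta$ one has $S(y):=\sum_{i=0}^{K-1}g_iy_i\in[g_{\min},g_{\max}]$ with $g_{\max}:=\max_i g_i$, so it suffices to prove the bound on $\Delta$. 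Since $\Delta$ is compact and convex and $f$ extends to a $C^1$ map on the open set $\{y:S(y)>0\}\supseteq\Delta$, the mean value inequality along line segments reduces the claim to a uniform bound on the Euclidean operator norm of the Jacobian $f'(y)$ over $y\in\Delta$ — this is exactly the matrix $f'$ written out in Equation~(\ref{b'}).

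Next I would bound the entries of $f'(y)$. The affine part contributes the entries $\mu p_{k,j}\mathbf{1}\{k\geq\max\{j,K_U\}\}$ and $-\mu\mathbf{1}\{k=j\geq K_U\}$; each is bounded by $\mu$ and each row has only finitely many of them, so this block has operator norm at most $c_K\mu$ for a constant depending only on $K$. The recharge part contributes $\frac{\lambda\gamma g_kg_jy_j}{S(y)^2}-\frac{\lambda\gamma g_j}{S(y)}\mathbf{1}\{j=k\}$; the key observation is that $\frac{g_jy_j}{S(y)}\leq 1$ for every $j$ (these are the weights of a convex combination, since $\sum_j\frac{g_jy_j}{S(y)}=1$), hence $\frac{g_kg_jy_j}{S(y)^2}\leq\frac{g_k}{S(y)}\leq\frac{g_{\max}}{g_{\min}}$ and $\frac{g_j}{S(y)}\leq\frac{g_{\max}}{g_{\min}}$. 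Collecting these estimates over the finitely many nonzero entries of each row yields $\sup_{y\in\Delta}\|f'(y)\|\leq C\left(\frac{\lambda\gamma g_{\max}}{g_{\min}}+\mu\right)$ for an absolute constant $C$, which is the asserted Lipschitz constant up to the value of $C$. One may also avoid differentiability entirely: writing $\frac{g_jy_j}{S(y)}-\frac{g_j\tilde y_j}{S(\tilde y)}=\frac{g_j(y_j-\tilde y_j)}{S(y)}+\frac{g_j\tilde y_j}{S(\tilde y)}\cdot\frac{S(\tilde y)-S(y)}{S(y)}$, bounding the first summand by $\frac{g_{\max}}{g_{\min}}|y_j-\tilde y_j|$ and the second using $\frac{g_j\tilde y_j}{S(\tilde y)}\leq1$, $|S(\tilde y)-S(y)|\leq g_{\max}\sum_i|y_i-\tilde y_i|$ and $S(y)\geq g_{\min}$, then taking the Euclidean norm over $j$ and using $\sum_j\frac{g_j\tilde y_j}{S(\tilde y)}=1$, gives the same conclusion.

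The only genuine obstacle is the singularity of the rational term at $y=0$: on all of $[0,1]^{K}$ the map $f$ is \emph{not} Lipschitz (take $y,\tilde y$ small multiples of two distinct standard basis vectors, where the recharge term jumps between $0$ and $1$ while $\|y-\tilde y\|\to0$). This is precisely why the model assumes $\min_i g_i>0$ and why one restricts to the invariant simplex $\Delta$, on which $S(y)\geq g_{\min}>0$; once that lower bound is secured, the remainder is routine bookkeeping over the finitely many nonzero terms of the drift.
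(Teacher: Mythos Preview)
Your argument is correct and considerably more careful than the paper's own proof, which is literally the single displayed inequality with no supporting computation. In that sense there is no real ``approach'' in the paper to compare against; you have supplied the missing work.

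Two points are worth highlighting. First, your observation that $f$ is \emph{not} Lipschitz on all of $[0,1]^{K}$, but only on the invariant simplex $\Delta=\{y\in[0,1]^{K}:\sum_k y_k=1\}$ where $S(y)\geq g_{\min}>0$, is exactly right and is a genuine correction to the paper's statement, which (taken at face value for arbitrary $y,\tilde y\in[0,1]^{K}$) is false for the reason you give. Restricting to $\Delta$ is harmless for the application, since both $Y^{N}$ and the mean field solution $y$ live there, but it is the crucial hypothesis that makes the rational term tame. Second, your Lipschitz constant carries a dimension-dependent factor $C=C(K)$, whereas the paper writes the constant $2$. One can in fact recover a $K$-free constant by bounding the column sums of the Jacobian in Equation~(\ref{b'}) --- the affine block contributes at most $2\mu$ per column (since $\sum_{j\leq k}p_{k,j}=1$), and the recharge block contributes at most $2\lambda\gamma g_{\max}/g_{\min}$ per column by the same convex-weight trick $g_jy_j/S(y)\leq 1$ you already use --- but since any finite Lipschitz constant suffices for the Gronwall argument in Theorem~\ref{fluid_limit}, your bound is perfectly adequate.
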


\begin{proposition}[Drift is Asymptotically Close to Lipschitz Drift]\label{drift}
Under the assumptions of Theorem \ref{fluid_limit}, we have for any $\epsilon>0$ and $s\geq 0$,
$$\lim_{N\rightarrow \infty}P(|F^N(Y^{N}(s))-f(Y^{N}(s))|>\epsilon)= 0.$$
\begin{proof}
\begin{eqnarray}
\left| F^N(Y^{N}(s))-f(Y^{N}(s))\right|&=&\left|\sum_{k=0}^{K-1}\left(\frac{N^*}{N}-\gamma\right)\frac{Y^{N}(s)(k)g_k}{\sum_{i=0}^{K-1}Y^{N}(s)(i)g_i}(\mathbf{1}_{K-1}-\mathbf{1}_{k})\right|\nonumber\\
&\leq & 2\left|\frac{N^*}{N}-\gamma\right|\left|\sum_{k=0}^{K-1}\frac{\max_i g_i}{\min_i g_i} \right|\nonumber\\
&=& 2\frac{K\max_i g_i}{\min_i g_i} \left|\frac{N^*}{N}-\gamma\right|  \rightarrow 0
\end{eqnarray}
\end{proof}
\end{proposition}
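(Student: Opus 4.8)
The plan is to exploit the fact that $F^N$ and the limiting drift $f$ differ in only a single coefficient, so that their difference is an \emph{explicit, deterministic} multiple of $\frac{N^*}{N}-\gamma$. First I would subtract $f$ (Equation~(\ref{eqn:b})) from the defining expression for $F^N$ and observe that every term involving $\mu p_{kj}$ is identical in the two drifts and therefore cancels; the only surviving contribution is the swapper term, in which $F^N$ carries the factor $\frac{\lambda N^*}{N}$ exactly where $f$ carries $\lambda\gamma$. Substituting the state $y=Y^N(s)$ this yields
\[
F^N(Y^N(s))-f(Y^N(s))=\lambda\left(\frac{N^*}{N}-\gamma\right)\sum_{k=0}^{K-1}\frac{Y^N_k(s)\,g_k}{\sum_{i=0}^{K-1}Y^N_i(s)\,g_i}\,(\mathbf{1}_{K-1}-\mathbf{1}_{k}),
\]
which is the first displayed line of the claimed identity.

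Next I would bound the right-hand side by a \emph{deterministic} constant, uniformly over all admissible states. Because $Y^N(s)$ lives in the simplex $\{y\in[0,1]^K:\sum_i y_i=1\}$ and the weights $g_i$ are strictly positive, the denominator satisfies $\sum_i Y^N_i(s)g_i\ge \min_i g_i>0$, so every choice probability $\frac{Y^N_k(s)g_k}{\sum_i Y^N_i(s)g_i}$ is well defined and at most $\frac{\max_i g_i}{\min_i g_i}$ (one could also simply use that these probabilities sum to $1$). Since $\|\mathbf{1}_{K-1}-\mathbf{1}_k\|\le 2$, the triangle inequality then gives
\[
\big|F^N(Y^N(s))-f(Y^N(s))\big|\;\le\;2\,\frac{K\max_i g_i}{\min_i g_i}\,\left|\frac{N^*}{N}-\gamma\right|.
\]

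Finally, the right-hand side is a fixed deterministic null sequence, since $\lim_{N\to\infty}\frac{N^*}{N}=\gamma$ by the hypotheses of Theorem~\ref{fluid_limit}. Hence for each $\epsilon>0$ there is an $N_0$ such that for all $N\ge N_0$ the event $\{|F^N(Y^N(s))-f(Y^N(s))|>\epsilon\}$ is empty, and so its probability is $0$; this gives the stated limit (and in fact the convergence is deterministic, which is stronger than convergence in probability). There is essentially no genuine obstacle in this argument; the one point that must be checked is that the denominator $\sum_i g_i Y^N_i(s)$ stays bounded away from $0$ uniformly in $N$ and $s$ — this is precisely why the model imposes $\min_i g_i>0$, and it is what makes the bound both uniform in the state and independent of the time $s$.
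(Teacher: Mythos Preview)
Your proposal is correct and follows essentially the same approach as the paper: subtract the two drifts so that only the swapper term survives, bound the resulting choice-model coefficients by $\frac{\max_i g_i}{\min_i g_i}$ and $\|\mathbf{1}_{K-1}-\mathbf{1}_k\|\le 2$, and then use $\frac{N^*}{N}\to\gamma$ to get a deterministic null bound. Your write-up is in fact slightly more careful than the paper's (you retain the factor $\lambda$ in the difference and explicitly justify why the denominator is bounded below by $\min_i g_i>0$).
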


\subsection*{Proof of Central Limit Results}

\begin{lemma}\label{martingale_brackets}
$\sqrt{N}M^{N}(t)$ is a family of martingales independent of $D^{N}(0)$ with Doob-Meyer brackets given by
\begin{eqnarray}
& &\boldlangle \sqrt{N}M^{N}_k(t),\sqrt{N}M^{N}_j(t)\boldrangle\nonumber\\
&=&\begin{cases}
\int_{0}^{t}\left[\sum_{i=\max(K_U,k+1)}^{K-1}\mu p_{i,k}Y^N_i(s) +\left(\mu(1-p_{k,k})\mathbf{1}_{\{k\geq K_U\}}\right.\right. & \nonumber\\
+\left.\left.\lambda N^*\frac{ g_k}{N\sum_{i=0}^{K-1}Y^N_i(s)g_i} \right)Y^N_k(s)\right]ds, & k=j<K-1 \nonumber\\
\int_{0}^{t}\left[\mu(1-p_{K-1,K-1})Y_{K-1}^N(s)+\lambda \frac{N^*}{N}\left(1- \frac{g_{K-1}Y_{K-1}^N(s)}{\sum_{i=0}^{K-1}Y_i^N(s)g_i}\right)\right]ds, & k=j=K-1\\
-\int_{0}^{t} \left[\mu p_{k,j}Y^N_k(s)+\lambda N^*\frac{ g_j}{N\sum_{i=0}^{K-1}Y_i^N(s)g_i}Y_j^N(s)\mathbf{1}\{k=K-1\}\right]ds,  & j<k, k\geq K_U\nonumber\\
0. & \text{otherwise}
\end{cases}
\end{eqnarray}
\end{lemma}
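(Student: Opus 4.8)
The plan is to read off the bracket from the generator of the Markov jump process $Y^{N}$ and then to evaluate the resulting carr\'e-du-champ explicitly for the two transition families. Recall from the semimartingale decomposition~(\ref{semiY}) used in the proof of Theorem~\ref{fluid_limit} that $M^{N}_{k}(t)=Y^{N}_{k}(t)-Y^{N}_{k}(0)-\int_{0}^{t}F^{N}_{k}(Y^{N}(s))\,ds$ is exactly the Dynkin martingale attached to the coordinate map $\phi_{k}(y)=y_{k}$, so that $F^{N}_{k}(y)=\mathcal{A}^{N}\phi_{k}(y)=\sum_{x\neq y}(x_{k}-y_{k})Q^{N}(y,x)$, where $\mathcal{A}^{N}$ is the generator of $Y^{N}$ (bounded, since for each fixed $N$ the state space is finite and $\sum_{i}y_{i}g_{i}\geq\min_{i}g_{i}>0$ on it). First I would invoke the standard fact (see e.g. \citet{Ethier2009}, or the analogous computation in \citet{tao2017stochastic}) that $M^{N}_{k}(t)M^{N}_{j}(t)-\int_{0}^{t}\Gamma^{N}_{kj}(Y^{N}(s))\,ds$ is again a martingale, with carr\'e-du-champ operator
\[
\Gamma^{N}_{kj}(y)=\mathcal{A}^{N}(\phi_{k}\phi_{j})(y)-\phi_{k}(y)\,\mathcal{A}^{N}\phi_{j}(y)-\phi_{j}(y)\,\mathcal{A}^{N}\phi_{k}(y)=\sum_{x\neq y}(x_{k}-y_{k})(x_{j}-y_{j})\,Q^{N}(y,x).
\]
This gives $\langle M^{N}_{k}(t),M^{N}_{j}(t)\rangle=\int_{0}^{t}\Gamma^{N}_{kj}(Y^{N}(s))\,ds$, hence $\langle\sqrt{N}M^{N}_{k}(t),\sqrt{N}M^{N}_{j}(t)\rangle=N\int_{0}^{t}\Gamma^{N}_{kj}(Y^{N}(s))\,ds$, and it remains only to compute $N\,\Gamma^{N}_{kj}$.

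Next I would substitute the swapping and riding transitions into $\Gamma^{N}_{kj}$. Every genuine jump has the form $x-y=\tfrac{1}{N}(\mathbf{1}_{a}-\mathbf{1}_{b})$ with $a\ne b$ (the self-loops, namely a swap at bucket $K-1$ or a ride with $m=\ell$, are excluded by the $x\ne y$ sum), so $(x_{k}-y_{k})(x_{j}-y_{j})=\tfrac{1}{N^{2}}(\mathbf{1}\{k=a\}-\mathbf{1}\{k=b\})(\mathbf{1}\{j=a\}-\mathbf{1}\{j=b\})$, and since each rate carries a factor $N$ (namely $\lambda N^{*}$ for a swap and $\mu N$ for a ride), the product $N\,\Gamma^{N}_{kj}$ is $O(1)$: a swap at bucket $\ell<K-1$ enters with $(a,b)=(K-1,\ell)$ and weight $\lambda\frac{N^{*}}{N}\,y_{\ell}g_{\ell}/\!\sum_{i}y_{i}g_{i}$, and a ride $\ell\to m$ with $m<\ell$, $\ell\ge K_{U}$ enters with $(a,b)=(m,\ell)$ and weight $\mu\,p_{\ell m}y_{\ell}$. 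On the diagonal $k=j$, $(\mathbf{1}\{k=a\}-\mathbf{1}\{k=b\})^{2}=\mathbf{1}\{k=a\}+\mathbf{1}\{k=b\}$ since $a\ne b$, which collapses the $\ell$- and $m$-sums: the riding part becomes $\sum_{i=\max(K_{U},k+1)}^{K-1}\mu p_{i,k}y_{i}+\mu(1-p_{k,k})\mathbf{1}\{k\ge K_{U}\}y_{k}$ (the second term via $\sum_{m\le k}p_{k,m}=1$), and the swapping part becomes $\lambda\frac{N^{*}}{N}\,g_{k}y_{k}/\!\sum_{i}y_{i}g_{i}$ when $k<K-1$ and $\lambda\frac{N^{*}}{N}\bigl(1-g_{K-1}y_{K-1}/\!\sum_{i}y_{i}g_{i}\bigr)$ when $k=K-1$, the latter because every swap lands in the full-battery bucket $K-1$. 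On the off-diagonal, by symmetry take $j<k$; a short inspection of $(\mathbf{1}\{k=a\}-\mathbf{1}\{k=b\})(\mathbf{1}\{j=a\}-\mathbf{1}\{j=b\})$ shows the only surviving contributions are the ride with $\ell=k\ge K_{U}$, $m=j$ (giving $-\mu p_{k,j}y_{k}$) and, when $k=K-1$, the swap with $\ell=j$ (giving $-\lambda\frac{N^{*}}{N}g_{j}y_{j}/\!\sum_{i}y_{i}g_{i}$), while everything vanishes when $k<K_{U}$; this produces the third and fourth cases of the statement.

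Finally, the independence of $\sqrt{N}M^{N}$ from $D^{N}(0)=\sqrt{N}(Y^{N}(0)-y(0))$ follows because $M^{N}(0)=0$ and, conditionally on $Y^{N}(0)$, the post-zero dynamics of $Y^{N}$ are a functional of Poisson clocks independent of the (deterministically shifted) initial configuration; alternatively one may cite the corresponding statement in \citet{tao2017stochastic}. I expect the main obstacle to be the bookkeeping in the second step: keeping straight the distinguished role of the full-battery bucket $K-1$ into which every swap lands, the way the threshold $K_{U}$ turns the riding transitions on and off, and verifying that all the potential cross terms outside the regime $\{j<k,\;k\ge K_{U}\}$ (and its mirror image) cancel exactly, so that the bracket is genuinely zero there.
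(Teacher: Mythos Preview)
Your proposal is correct and follows essentially the same route as the paper: both compute the predictable bracket from the generator via Dynkin's formula, then evaluate it transition by transition. The only cosmetic difference is that the paper obtains the off-diagonal bracket by polarization, i.e.\ $\langle\sqrt{N}M^{N}_{k},\sqrt{N}M^{N}_{j}\rangle=\tfrac{N}{2}\bigl(\langle M^{N}_{k}+M^{N}_{j}\rangle-\langle M^{N}_{k}\rangle-\langle M^{N}_{j}\rangle\bigr)$, whereas you compute the carr\'e-du-champ $\Gamma^{N}_{kj}(y)=\sum_{x\ne y}(x_{k}-y_{k})(x_{j}-y_{j})Q^{N}(y,x)$ directly; the two are equivalent and your case analysis of the $(a,b)$ pairs matches the paper's output.
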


\begin{proof}[Proof of Lemma \ref{martingale_brackets}]
By Dynkin's formula,
\begin{eqnarray}
\boldlangle \sqrt{N}M_k^{N}(t)\boldrangle&=&\int_{0}^{t}N\sum_{x\neq Y^{N}(s)}|x-Y^{N}(s)|^2 Q(Y^{N}(s),x)(k)ds\nonumber\\
&=&\int_{0}^{t}N\alpha_k(Y^N(s))ds\nonumber\\
&=&\begin{cases}
\int_{0}^{t}\left[\sum_{i=\max(K_U,k+1)}^{K-1}\mu p_{i,k}Y^N_i(s) +\left(\mu(1-p_{k,k})\mathbf{1}_{\{k\geq K_U\}}\right.\right.\nonumber\\
+\left.\left.\frac{\lambda N^* g_k}{N\sum_{i=0}^{K-1}Y^N_i(s)g_i} \right)Y^N_k(s)\right]ds, & k<K-1\nonumber\\
\int_{0}^{t}\left[\mu(1-p_{K-1,K-1})Y^N_{K-1}(s)+\lambda \frac{N^*}{N}\left(1- \frac{g_{K-1}Y^N_{K-1}(s)}{\sum_{i=0}^{K-1}Y^N_i(s)g_i}\right)\right]ds, & k=K-1
\end{cases}\\
&\triangleq &\int_{0}^{t}(F^N_{+}(Y^{N}(s))(k)+F^N_{-}(Y^{N}(s))(k))ds
\end{eqnarray}
where 
$$F^N_{+}(Y^{N}(s))(k)=\begin{cases}
\sum_{i=\max(K_U,k+1)}^{K-1}\mu p_{i,k}Y^N_i(s), & k<K-1 \\
\lambda \frac{N^*}{N}\left(1- \frac{g_{K-1}Y^N_{K-1}(s)}{\sum_{i=0}^{K-1}Y^N_i(s)g_i}\right), & k=K-1
\end{cases}$$
and
$$F^N_{-}(Y^{N}(s))(k)=\begin{cases}
\left(\mu(1-p_{k,k})\mathbf{1}_{\{k\geq K_U\}}+\frac{\lambda N^* g_k}{N\sum_{i=0}^{K-1}Y^N_i(s)g_i} \right)Y^N_k(s), & k<K-1 \\
\mu(1-p_{K-1,K-1})Y^N_{K-1}(s), & k=K-1
\end{cases}$$
To compute  $\boldlangle \sqrt{N}M_k^{N}(t),\sqrt{N}M_j^{N}(t)\boldrangle$ for $j<k$ and $k\geq K_U$, since
\begin{equation}
\begin{split}
&\boldlangle M_k^{N}(t)+M_j^{N}(t)\boldrangle\\
=&\int_{0}^{t}\sum_{x\neq Y^{N}(s)}\left|x_k+x_j-Y^{N}(s)(k)-Y_{j}^{N}(s)\right|^2 Q(Y^{N}(s),x)ds\\
=&\frac{1}{N}\int_{0}^{t}\left[\mu\left(\sum_{i=\max(j+1,K_U),i\neq k}^{K-1}p_{i,j}+\sum_{i=k+1}^{K-1}p_{i,k}\right)Y^N_i(s) +\left(\mu(1-p_{j,j})+\frac{\lambda \gamma g_j}{\sum_{i=0}^{K-1}Y^N_i(s)g_i} \right)Y^N_j(s) \right.\\
& \left. +\left(\mu(1-p_{k,k}-p_{k,j})+\frac{\lambda \gamma g_k}{\sum_{i=0}^{K-1}Y^N_i(s)g_i} \right)Y^N_k(s)+\lambda \gamma \{k=K-1\}\right]ds
\end{split}
\end{equation}
We have that
\begin{equation}
\begin{split}
&\boldlangle \sqrt{N}M_k^{N}(t),\sqrt{N}M_j^{N}(t)\boldrangle\\
=&\frac{N}{2}\left[\boldlangle M_k^{N}(t)+M_j^{N}(t)\boldrangle-\boldlangle M_k^{N}(t)\boldrangle-\boldlangle M_j^{N}(t)\boldrangle\right]\\
=&\frac{1}{2}\int_{0}^{t}\left[\mu\left(\sum_{i=\max(j+1,K_U),i\neq k}^{K-1}p_{i,j}+\sum_{i=k+1}^{K-1}p_{i,k}\right)Y^N_i(s) +\left(\mu(1-p_{j,j})+\frac{\lambda \gamma g_j}{\sum_{i=0}^{K-1}Y^N_i(s)g_i} \right)Y^N_j(s) \right.\\
& \left. +\left(\mu(1-p_{k,k}-p_{k,j})+\frac{\lambda \gamma g_k}{\sum_{i=0}^{K-1}Y^N_i(s)g_i} \right)Y^N_k(s)+\lambda \gamma \{k=K-1\}\right]ds\\
&-\frac{1}{2}\int_{0}^{t}(F^N_{+}(Y^{N}(s))(k)+F^N_{+}(Y^{N}(s))(j)+F^N_{-}(Y^{N}(s))(k)+F^N_{-}(Y^{N}(s))(j))ds\\
=&-\int_{0}^{t} \left[\mu p_{k,j}Y^N_k(s)+\lambda N^*\frac{ g_j}{N\sum_{i=0}^{K-1}Y_i^N(s)g_i}Y_j^N(s)\mathbf{1}\{k=K-1\}\right] ds.
\end{split}
\end{equation}
Finally, $M_k^N$ and $M_j^N$ are independent when $j,k\leq K_U$, thus in this case the Doob-Meyer brackets is equal to 0.
\end{proof}

\begin{proposition}\label{driftbound}
For any $s\geq 0$,
\begin{equation}
\limsup_{N\rightarrow \infty}\sqrt{N}\left|F^N(Y^{N}(s))-f(Y^{N}(s))\right|=0.
\end{equation}
where $F^N$ is the drift function defined in Equation (\ref{F^N}).
\end{proposition}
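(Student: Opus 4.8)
The plan is to exploit the fact that the prelimit drift $F^N$ in (\ref{F^N}) and the limiting drift $f$ in (\ref{eqn:b}) are identical term-by-term \emph{except} in the swapper coefficient, where $F^N$ carries $\frac{N^*}{N}$ and $f$ carries $\gamma$. Hence the difference collapses to a single explicit term proportional to $\frac{N^*}{N}-\gamma$: subtracting (\ref{eqn:b}) from (\ref{F^N}), every ``riding'' summand cancels and
\begin{equation}
F^N(Y^N(s))-f(Y^N(s))=\lambda\left(\frac{N^*}{N}-\gamma\right)\sum_{k=0}^{K-1}\frac{g_k\,Y_k^N(s)}{\sum_{i=0}^{K-1}g_i\,Y_i^N(s)}\,(\mathbf{1}_{K-1}-\mathbf{1}_k).
\end{equation}
This is the same cancellation used in the proof of Proposition~\ref{drift}, now carried out keeping the $\sqrt{N}$ scaling in mind.

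\noindent Next I would bound the remaining sum uniformly in $N$ and in the (random) state $Y^N(s)$. In Model~1 every scooter is idle at every instant, so $\sum_{i=0}^{K-1}Y_i^N(s)=1$; since $\{g_i\}_{i=0}^{K-1}$ is strictly positive, the denominator satisfies $\sum_{i=0}^{K-1}g_iY_i^N(s)\ge \min_i g_i>0$, so the expression is well defined, the weights $\frac{g_kY_k^N(s)}{\sum_i g_iY_i^N(s)}$ are nonnegative, and they sum to exactly $1$ (numerator sum equals denominator). Because $|\mathbf{1}_{K-1}-\mathbf{1}_k|\le \sqrt{2}$ in the Euclidean norm, the triangle inequality gives the \emph{deterministic} bound
\begin{equation}
\left|F^N(Y^N(s))-f(Y^N(s))\right|\le \sqrt{2}\,\lambda\left|\frac{N^*}{N}-\gamma\right|.
\end{equation}
Note that, unlike in Proposition~\ref{drift}, no Chebyshev/probabilistic estimate is needed: after this crude bound the right-hand side no longer depends on $Y^N(s)$ at all.

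\noindent Finally, multiplying by $\sqrt{N}$,
\begin{equation}
\sqrt{N}\left|F^N(Y^N(s))-f(Y^N(s))\right|\le \sqrt{2}\,\lambda\,\sqrt{N}\left|\frac{N^*}{N}-\gamma\right|,
\end{equation}
and the right-hand side tends to $0$ as $N\to\infty$ under the standing assumption on the swapper-to-scooter ratio used throughout Section~\ref{Central_Limit} (namely that $\sqrt{N}(N^*/N-\gamma)$ vanishes in the limit, which is precisely the regime in which the OU limit of Theorem~\ref{difftheorem} acquires no extra deterministic drift). Taking $\limsup_{N\to\infty}$ yields the claim.

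\noindent The only point requiring genuine care is the uniform lower bound $\sum_i g_iY_i^N(s)\ge\min_i g_i>0$, which is exactly why the model uses a choice rule with all weights $g_i>0$ rather than a ``recharge-the-lowest-bucket-only'' rule, as emphasized in the Remark following the model description; everything else is a one-line estimate.
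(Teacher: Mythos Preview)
Your argument is correct and follows exactly the paper's route: isolate the swapper term $\lambda\bigl(\tfrac{N^*}{N}-\gamma\bigr)\sum_k \tfrac{g_kY_k^N}{\sum_i g_iY_i^N}(\mathbf{1}_{K-1}-\mathbf{1}_k)$, bound it by a constant times $|N^*/N-\gamma|$ uniformly in the state, and finish with the $\sqrt{N}$ rate assumption. Your constant $\sqrt{2}\,\lambda$ (from ``weights sum to $1$'') is sharper than the paper's $2K\max_i g_i/\min_i g_i$, but the structure of the proof is identical.
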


\begin{proof}[Proof of Proposition \ref{driftbound}]
\begin{eqnarray}
& &\limsup_{N\rightarrow \infty}\sqrt{N}\left|F^N(Y^{N}(s))-f(Y^{N}(s))\right|\nonumber\\
&=&\limsup_{N\rightarrow \infty}\sqrt{N}\left|\sum_{k=0}^{K-1}\left(\frac{N^*}{N}-\gamma\right)\frac{Y^{N}_{k}(s)g_k}{\sum_{i=0}^{K-1}Y^{N}_{i}(s)g_i}(\mathbf{1}_{K-1}-\mathbf{1}_{k})\right|\nonumber\\
&\leq & \limsup_{N\rightarrow \infty}2\sqrt{N}2\left|\frac{N^*}{N}-\gamma\right|\left|\sum_{k=0}^{K-1}\frac{\max_i g_i}{\min_i g_i} \right|\nonumber\\
&=& \limsup_{N\rightarrow \infty}2\sqrt{N}\frac{K\max_i g_i}{\min_i g_i} \left|\frac{N^*}{N}-\gamma\right| \nonumber\\
&=&0
\end{eqnarray} 
\end{proof}

\begin{lemma}[Finite Horizon Bound]\label{L2bound}
For any $T\geq 0$, if $$\limsup_{N\rightarrow \infty}\mathbb{E}\left(|D^{N}(0)|^2 \right) < \infty ,$$ then we have $$\limsup_{N\rightarrow \infty}\mathbb{E}\left(\sup_{0\leq t\leq T}|D^{N}(t)|^2 \right) < \infty .$$
\end{lemma}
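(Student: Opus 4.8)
The plan is to run the same Gronwall-type argument as in the proof of Theorem~\ref{fluid_limit}, but now at the diffusion scale and in $L^2$ rather than in probability, with a stopping-time localization to make the estimate rigorous. Starting from the semimartingale decomposition in Equation~(\ref{semiY}) and the integral equation $y(t)=y(0)+\int_0^t f(y(s))\,ds$, I would write
$$D^N(t)=D^N(0)+\sqrt{N}M^N(t)+\sqrt{N}\int_0^t\left(F^N(Y^N(s))-f(y(s))\right)ds,$$
and split the integrand by adding and subtracting $f(Y^N(s))$. The contribution $\sqrt{N}\int_0^t|F^N(Y^N(s))-f(Y^N(s))|\,ds$ is controlled exactly as in Proposition~\ref{driftbound}: it is at most $2\sqrt{N}\,\frac{K\max_i g_i}{\min_i g_i}\left|\frac{N^*}{N}-\gamma\right| t$, and under the standing hypothesis $\limsup_{N\to\infty}\sqrt{N}\left(\frac{N^*}{N}-\gamma\right)<\infty$ this is bounded by $C_1 T$ with $C_1$ independent of $N$. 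The contribution $\sqrt{N}\int_0^t|f(Y^N(s))-f(y(s))|\,ds$ is at most $L\int_0^t|D^N(s)|\,ds\le L\int_0^t\sup_{u\le s}|D^N(u)|\,ds$, since $f$ is Lipschitz with constant $L$ by Proposition~\ref{Lipschitz}.

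For the martingale term I would invoke Proposition~\ref{bound} applied to $\sqrt{N}M^N$, which gives $\mathbb{E}\left(\sup_{t\le T}|\sqrt{N}M^N(t)|^2\right)\le 4N\,\mathbb{E}\int_0^T\alpha(Y^N(s))\,ds$. As computed in the proof of Theorem~\ref{fluid_limit}, once $N$ is large enough that $N^*/N\le 2\gamma$ one has the uniform pointwise bound $N\alpha_k(y)\le \mu K+\mu+2\lambda\gamma$ for all $y\in[0,1]^K$, so $\mathbb{E}\left(\sup_{t\le T}|\sqrt{N}M^N(t)|^2\right)\le C_2 T$ with $C_2$ independent of $N$. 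Combining the three bounds, using $(a+b+c+d)^2\le 4(a^2+b^2+c^2+d^2)$ and Cauchy--Schwarz on the time integral in the form $\left(\int_0^t\sup_{u\le s}|D^N(u)|\,ds\right)^2\le T\int_0^t\sup_{u\le s}|D^N(u)|^2\,ds$, and then taking expectations, one arrives, with $g^N(t):=\mathbb{E}\left(\sup_{s\le t}|D^N(s)|^2\right)$, at an inequality of the form
$$g^N(t)\le 4\,\mathbb{E}\left(|D^N(0)|^2\right)+4C_2 T+4C_1^2 T^2+4L^2 T\int_0^t g^N(s)\,ds.$$
Gronwall's lemma then yields $g^N(T)\le\left(4\,\mathbb{E}(|D^N(0)|^2)+4C_2T+4C_1^2T^2\right)e^{4L^2T^2}$, and taking $\limsup_{N\to\infty}$ together with the hypothesis $\limsup_{N\to\infty}\mathbb{E}(|D^N(0)|^2)<\infty$ gives the claim.

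The one point requiring care --- and the step I expect to be the main obstacle --- is that this direct use of Gronwall presumes $g^N(t)<\infty$, which is exactly what we are trying to prove. I would close this gap by localization: set $\tau_M=\inf\{t\ge 0:|D^N(t)|\ge M\}$, carry out the whole estimate with $t$ replaced by $t\wedge\tau_M$ so that every supremum is bounded by $M+1/\sqrt{N}$ and the truncated quantity $\mathbb{E}\left(\sup_{s\le t\wedge\tau_M}|D^N(s)|^2\right)$ is manifestly finite, obtain the displayed bound with a right-hand side free of $M$, and then let $M\to\infty$ by monotone convergence. This is legitimate because $\tau_M\uparrow\infty$ almost surely: $D^N$ is a pure-jump process whose jumps have size $1/\sqrt{N}$ and whose total jump rate is $O(N)$ on $[0,T]$, so it makes only finitely many jumps on $[0,T]$ and cannot explode. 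All remaining manipulations are routine, the only recurring check being that each constant produced is uniform in $N$.
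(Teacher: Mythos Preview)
Your proposal is correct and uses the same building blocks as the paper: the semimartingale decomposition of $D^N$, Proposition~\ref{driftbound} for the $\sqrt{N}(F^N-f)$ term, the Lipschitz bound of Proposition~\ref{Lipschitz} for the $\sqrt{N}(f(Y^N)-f(y))$ term, and the Doob/Proposition~\ref{bound} estimate with $N\alpha_k\le \mu K+\mu+2\lambda\gamma$ for the martingale term.

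The one structural difference is the order in which you mix Gronwall with squaring and expectation. You square first, take expectations to form $g^N(t)=\mathbb{E}\bigl(\sup_{s\le t}|D^N(s)|^2\bigr)$, and then apply Gronwall to $g^N$; this is why you need the stopping-time localization to ensure $g^N(t)<\infty$ before Gronwall applies. The paper instead applies Gronwall directly to the \emph{pathwise} inequality
\[
|D^N(t)|\le |D^N(0)|+\sqrt{N}|M^N(t)|+O(1)\,t+L\int_0^t|D^N(s)|\,ds,
\]
obtaining the pathwise bound $\sup_{t\le T}|D^N(t)|\le e^{LT}\bigl(|D^N(0)|+O(1)T+\sup_{t\le T}|\sqrt{N}M^N(t)|\bigr)$, and only afterwards squares and takes expectations. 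Because the Gronwall step is purely deterministic here, no finiteness of expectation is needed along the way and the localization argument becomes unnecessary. Your route is a little more laborious but perfectly valid; the paper's ordering is shorter and sidesteps the obstacle you flagged.
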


\begin{proof}[Proof of Lemma \ref{L2bound}]
By Proposition \ref{driftbound}, $\sqrt{N}|F^N(Y^{N}(s))-f(Y^{N}(s))|=O(1)$, then
\begin{equation}
\begin{split}
|D^{N}(t)|&\leq |D^{N}(0)|+\sqrt{N}|M^{N}(t)|+O(1)t+\int_{0}^{t}\sqrt{N} |f(Y^{N}(s))-f(y(s))|ds\\
&\leq|D^{N}(0)|+\sqrt{N}|M^{N}(t)|+O(1)t+\int_{0}^{t}\sqrt{N}L|Y^{N}(s)-y(s)|ds\\
&=|D^{N}(0)|+\sqrt{N}|M^{N}(t)|+O(1)t+\int_{0}^{t}L|D^{N}(s)|ds.
\end{split}
\end{equation}
By Gronwall's Lemma,
$$\sup_{0\leq t\leq T}|D^{N}(t)|\leq e^{LT}\left(|D^{N}(0)|+O(1)T+\sup_{0\leq t\leq T}|\sqrt{N}M^{N}(t)|\right),$$
then
$$\limsup_{N\rightarrow \infty}\mathbb{E}\left(\sup_{0\leq t \leq T}|D^{N}(t)|^2\right)\leq e^{2LT}\left[\limsup_{N\rightarrow \infty}\mathbb{E}(|D^{N}(0)|)+O(1)T+\limsup_{N\rightarrow \infty}\mathbb{E}\left(\sup_{0\leq t\leq T}\sqrt{N}|M^{N}(t)|\right)\right]^2.$$
We know that 
$$\left[\mathbb{E}\left(\sup_{0\leq t\leq T}\sqrt{N}|M^{N}(t)|\right)\right]^2\leq N\mathbb{E}\left(\sup_{0\leq t\leq T}|M^{N}(t)|^2\right)\leq 4NA(N)T,$$
and that $A(N)=O(\frac{1}{N})$. Therefore
$$\limsup_{N\rightarrow \infty}\mathbb{E}\left(\sup_{0\leq t\leq T}\sqrt{N}|M^{N}(t)| \right)<\infty.$$
Together with our assumption that $\limsup_{N\rightarrow \infty}\mathbb{E}(|D^{N}(0)|^2)<\infty$, we have
$$\limsup_{N\rightarrow \infty}\mathbb{E}\left(\sup_{0\leq t \leq T}|D^{N}(t)|^2 \right)<\infty.$$\\
\end{proof}

\begin{lemma}\label{tightness}
If $(D^{N}(0))_{N=1}^{\infty}$ is tight then $(D^{N})_{N=1}^{\infty}$ is tight and its limit points are continuous.
\end{lemma}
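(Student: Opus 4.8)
The plan is to control $D^N$ by a vanishingly small initial perturbation, a martingale part, and an absolutely continuous drift part, and then to prove tightness of the martingale and of the drift separately; the two analytic inputs are the finite--horizon $L^2$ bound of Lemma~\ref{L2bound} and the explicit Doob--Meyer brackets of Lemma~\ref{martingale_brackets}. Subtracting $y(t)=y(0)+\int_0^t f(y(s))\,ds$ from the semimartingale decomposition~(\ref{semiY}) and multiplying by $\sqrt N$ gives
$$D^N(t)=D^N(0)+\sqrt N\,M^N(t)+\int_0^t\sqrt N\big(F^N(Y^N(s))-f(Y^N(s))\big)\,ds+\int_0^t\sqrt N\big(f(Y^N(s))-f(y(s))\big)\,ds .$$
I will show that the three non-initial terms each form a tight family in $\mathbb{D}(\mathbb{R}_+,\mathbb{R}^{K})$ whose limit points are continuous; since $D^N(0)$ is tight by hypothesis and constant in $t$, the sum $D^N$ is then tight with continuous limit points.

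For the drift, the first integrand is $o(1)$ uniformly in $s$ by Proposition~\ref{driftbound}, while the second has Euclidean norm at most $L\,|D^N(s)|$ by the Lipschitz bound on $f$ (Proposition~\ref{Lipschitz}). Hence the drift part is absolutely continuous with instantaneous rate bounded by $o(1)+L\sup_{s\le T}|D^N(s)|$, so over any interval $[\tau,\tau+\theta]$ with $\tau\le T$ a stopping time and $\theta\le\delta$ its increment has norm at most $\delta\big(o(1)+L\sup_{s\le T}|D^N(s)|\big)$. By Lemma~\ref{L2bound} the random variable $\sup_{s\le T}|D^N(s)|$ is bounded in probability uniformly in $N$, so this increment tends to $0$ in probability, uniformly in $N$, as $\delta\to 0$; combined with the pointwise tightness the same $L^2$ bound yields via Markov's inequality, Aldous's criterion gives a $C$-tight family.

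For the martingale, set $\widetilde M^N:=\sqrt N\,M^N$. By Lemma~\ref{martingale_brackets} each entry $\langle\widetilde M^N_k,\widetilde M^N_j\rangle_t$ equals $\int_0^t(\cdot)\,ds$ with an integrand bounded by a deterministic constant $C$, uniformly in $N$ for $N$ large, because $Y^N_i(s)\in[0,1]$, $\min_i g_i>0$, and $N^*/N\le 2\gamma$ for $N$ large (equivalently $N\alpha(Y^N(s))\le C$ in the notation of the proof of Theorem~\ref{fluid_limit}). Doob's $L^2$ inequality applied to the shifted martingale $u\mapsto\widetilde M^N(\tau+u)-\widetilde M^N(\tau)$, together with this bracket bound, then gives
$$\mathbb{E}\!\left[\sup_{u\le\theta}\big|\widetilde M^N(\tau+u)-\widetilde M^N(\tau)\big|^2\ \Big|\ \mathcal{F}_\tau\right]\ \le\ 4\,\mathbb{E}\!\left[\langle\widetilde M^N\rangle_{\tau+\theta}-\langle\widetilde M^N\rangle_\tau\ \Big|\ \mathcal{F}_\tau\right]\ \le\ 4C\theta$$
for every stopping time $\tau\le T$, so Aldous's criterion again applies and $\{\widetilde M^N\}$ is tight; since $\sup_{t\le T}\big|\widetilde M^N(t)-\widetilde M^N(t-)\big|\le c/\sqrt N\to 0$, its limit points are continuous. (Equivalently one invokes the martingale FCLT in Chapter~7 of \citet{Ethier2009}, whose hypotheses are exactly this Lipschitz--in--time bracket bound and the vanishing of the jumps.)

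Combining the pieces, $D^N$ is tight; and since its maximal jump is $1/\sqrt N\to 0$ while the drift component has genuinely continuous paths, every weak limit point of $(D^N)_{N=1}^{\infty}$ is supported on $C(\mathbb{R}_+,\mathbb{R}^{K})$, which is the claim. I expect the only substantive step to be the martingale one --- in particular verifying the uniform deterministic bound on the bracket integrands of Lemma~\ref{martingale_brackets}, which is exactly where the choice--model assumption $\min_i g_i>0$ and the scaling $N^*/N\to\gamma$ enter --- after which Doob's inequality, Aldous's criterion, and the vanishing--jump observation are routine.
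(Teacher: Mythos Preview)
Your proposal is correct and follows essentially the same route as the paper: the same semimartingale decomposition of $D^N$ into initial condition, $\sqrt{N}M^N$, and two drift integrals, with the same analytic inputs (Lemma~\ref{L2bound}, Lemma~\ref{martingale_brackets}, Proposition~\ref{driftbound}, Proposition~\ref{Lipschitz}). The only cosmetic differences are that the paper phrases tightness via Billingsley's oscillation condition rather than Aldous's criterion, and handles the martingale term by first proving convergence to Brownian motion (Ethier--Kurtz, Theorem~1.4, Chapter~7) and then invoking Prohorov, whereas you bound the bracket increments directly and apply Doob's inequality; both are standard and equivalent here.
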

\begin{proof}[Proof of Lemma \ref{tightness}]

To prove the tightness of $(D^{N})_{N=1}^{\infty}$ and the continuity of the limit points, we can apply results from \citet{billingsley2013convergence}, which implies that we only need to show the following two conditions holds for each $T>0$ and $\epsilon>0$,
\begin{itemize}
\item[(i)] 
\begin{equation}
\lim_{K\rightarrow \infty}\limsup_{N\rightarrow \infty}\mathbb{P}\left(\sup_{0\leq t\leq T}|D^{N}(t)|>K \right)=0,
\end{equation}
\item[(ii)] 
\begin{equation}
\lim_{\delta\rightarrow 0}\limsup_{N\rightarrow \infty}\mathbb{P}\left(w(D^{N},\delta,T)\geq \epsilon \right)=0
\end{equation}
\end{itemize}
where for $x\in \mathbb{D}^{d}$,
\begin{equation}
w(x,\delta,T)=\sup\left\{\sup_{u,v\in[t,t+\delta]}|x(u)-x(v)|:0\leq t\leq t+\delta\leq T\right\}.
\end{equation}
By Lemma \ref{L2bound}, there exists $C_{0}>0$ such that
\begin{eqnarray}
\lim_{K\rightarrow \infty}\limsup_{N\rightarrow \infty}\mathbb{P} \left(\sup_{0\leq t\leq T}|D^{N}(t)|>K \right) &\leq& \lim_{K\rightarrow \infty}\limsup_{N\rightarrow \infty}\frac{\mathbb{E}\left(\sup_{0\leq t\leq T}|D^{N}(t)|^2 \right)}{K^2} \\
&\leq&  \lim_{K\rightarrow \infty}\frac{C_{0}}{K^2} \\
&=&0,
\end{eqnarray}
which proves condition (i).

For condition (ii), we have that

\begin{eqnarray}
D^N(u) - D^N(v) &=& \underbrace{\sqrt{N} \cdot ( M^N(u) - M^N(v))}_{\text{first term}} + \underbrace{\int^{u}_{v} \sqrt{N} \left( F^N(Y^N(z)) - f(Y^N(z))  \right) dz}_{\text{second term}} \nonumber \\&+& \underbrace{\int^{u}_{v} \sqrt{N} \left( f(Y^N(z)) - f(y(z))  \right) dz }_{\text{third term}}
\end{eqnarray}
for any $0<t\leq u<v\leq t+\delta\leq T$.  Now it suffices to show that each of the three terms of $D^N(u) - D^N(v)$ satisfies condition (ii).  In what follows, we will show that each of the three terms satisfies condition (ii) to complete the proof of tightness. 

 For the first term, 
similar to the proof of Proposition ~\ref{drift}, we can show that 
$$\sup_{t\leq T}\left|F^N_{+}(Y^N(t))-f_{+}(Y^N(t))\right|\xrightarrow{p}0, \quad \sup_{t\leq T}\left|F^N_{-}(Y^N(t))-f_{-}(Y^N(t))\right|\xrightarrow{p}0.$$
And by the proof of Proposition \ref{Lipschitz}, $f_{+}(y), f_{-}(y)$ are also Lipschitz with constant $L$, then by the fact that the composition of Lipschitz functions are also Lipschitz,
\begin{eqnarray}
\max\left\{\sup_{t\leq T}|f_{+}(Y^N(t))-f_{+}(y(t))|,\sup_{t\leq T}|f_{-}(Y^N(t))-f_{-}(y(t))|\right\}\leq L\sup_{t\leq T}|Y^N(t)-y(t)|.
\end{eqnarray}
By Theorem \ref{fluid_limit}, 
\begin{equation}
\sup_{t\leq T}|Y^N(t)-y(t)|\xrightarrow{p} 0.
\end{equation}
Thus for any $\epsilon>0$,
\begin{eqnarray}
& &\lim_{N\rightarrow \infty}\mathbb{P}\left(\sup_{t\leq T}\left|\boldlangle \sqrt{N}M_k^N(t)\boldrangle- \boldlangle M_k(t)\boldrangle\right|>\epsilon\right)\nonumber \\
&=&\lim_{N\rightarrow \infty}\mathbb{P}\left(\sup_{t\leq T}\left|\int_{0}^{t}\left(F^N_{+}(Y^{N}(s))+F^N_{-}(Y^{N}(s))- f_{+}(y(s))-f_{-}(y(s))\right)ds\right|>\epsilon\right)\nonumber\\
&\leq & \lim_{N\rightarrow \infty}\mathbb{P}\left(\sup_{t\leq T}T\left|F^N_{+}(Y^N(t))- f_{+}(Y^{N}_{t})\right|>\epsilon/3\right)+\lim_{N\rightarrow \infty}\mathbb{P}\left(\sup_{t\leq T}T\left|F^N_{-}(Y^N(t))- f_{-}(Y^{N}_{t})\right|>\epsilon/3\right)\nonumber\\
& &+\lim_{N\rightarrow \infty}\mathbb{P}\left(\sup_{t\leq T}2LT\left|Y^N(t)- y(t)\right|>\epsilon/3 \right)\nonumber \\
&=& 0,
\end{eqnarray}
which implies
\begin{equation}\label{martingale_brackets_convergence}
\sup_{t\leq T}\left|\boldlangle \sqrt{N}M^N_k(t)\boldrangle- \boldlangle M_k(t)\boldrangle\right|\xrightarrow{p} 0.
\end{equation}
We also know that the jump size of $D^{N}(t)$ is $1/\sqrt{N}$, therefore
\begin{equation}
\lim_{N\rightarrow \infty}\mathbb{E}\left[\sup_{0<t\leq T}\left|M^{N}(t)-M^{N}(t-)\right| \right]=0.
\end{equation}
By Theorem 1.4 in Chapter 7 of \citet{Ethier2009},  $\sqrt{N}M^{N}(t)$ converges to the Brownian motion $M(t)$ in distribution in $\mathbb{D}(\mathbb{R}_{+},\mathbb{R}^{K+1})$. By Prohorov's theorem, $(\sqrt{N}M^{N})_{N=1}^{\infty}$ is tight. And since $M(t)$ is a Brownian motion, its sample path is almost surely continuous.

For the second term, we have by Proposition \ref{driftbound} that the quantity $ \sqrt{N} \left( F^N(Y^N(z)) - f(Y^N(z))  \right) $ is bounded for any value of $z\in [0,T]$.  Therefore, there exists some constant $C_{1}$ that does not depend on $N$ such that
\begin{equation}
\sup_{z\in [0,T]}\sqrt{N} \left| F^N(Y^N(z)) - f(Y^N(z))  \right|\leq C_{1}.
\end{equation}
Then
\begin{eqnarray}
& &\lim_{\delta\rightarrow 0}\lim_{N\rightarrow \infty}\mathbb{P}\left(\sup_{u,v\in [0,T],|u-v|\leq \delta}\int^{u}_{v} \sqrt{N} \left| F^N(Y^N(z)) - f(Y^N(z))  \right| dz > \epsilon \right)\nonumber \\ 
&\leq &  \lim_{\delta\rightarrow 0}\lim_{N\rightarrow \infty}\mathbb{P}\left(\delta \sup_{z\in [0,T]}\sqrt{N} \left| F^N(Y^N(z)) - f(Y^N(z))  \right|  > \epsilon \right)\nonumber \\
&\leq & \lim_{\delta\rightarrow 0}\mathbb{P}\left(\delta C_{1}  > \epsilon \right)\nonumber \\
&=& 0.
\end{eqnarray}
 Thus, we have proved the oscillation bound for the second term.  Finally for the third term we have that 
\begin{eqnarray}
\int^{u}_{v} \sqrt{N} \left| f(Y^N(z)) - f(y(z))  \right| dz & \leq& \int^{u}_{v} \sqrt{N} L\left| Y^N(z) - y(z) \right| dz\nonumber \\
&=& \int^{u}_{v} L \cdot \left|D^N(z)\right| dz \nonumber\\\
&\leq & L\delta \sup_{t\in [0,T]}|D^{N}(t)|.
\end{eqnarray}
By Lemma ~\ref{L2bound},  
\begin{eqnarray}
& &\lim_{\delta\rightarrow 0}\lim_{N\rightarrow \infty}\mathbb{P}\left(\sup_{u,v\in [0,T],|u-v|\leq \delta}\int^{u}_{v} \sqrt{N} \left| f(Y^N(z)) - f(y(z))  \right| dz>\epsilon\right)\nonumber\\
&\leq &\lim_{\delta\rightarrow 0}\lim_{N\rightarrow \infty}\mathbb{P}\left(L\delta\sup_{t\in [0,T]}|D^{N}(t)|>\epsilon\right)\nonumber\\
&\leq & \lim_{\delta\rightarrow 0}\lim_{N\rightarrow \infty}\frac{\mathbb{E}\left(\sup_{t\in [0,T]}|D^{N}(t)|^2\right)}{(\epsilon/L \delta)^2}\nonumber \\
&\leq &\lim_{\delta\rightarrow 0}\frac{C_{0}(L\delta)^2}{\epsilon^2}\nonumber \\
&=& 0,
\end{eqnarray}
 which implies that the oscillation bound holds for the third term.  
\end{proof}

\bibliographystyle{plainnat}
\bibliography{scooters}
\end{document}